\documentclass[11pt]{amsart}
\usepackage{tikz}
\usepackage{tikz-cd}
\usepackage{mathrsfs}
\usepackage[mathcal]{euscript}
\usepackage{graphicx}
\usepackage{amsthm,amscd}
\usepackage{calc}
\usepackage{color}

\usepackage{mathrsfs,dsfont}
\usepackage{fancyhdr,amscd}
\usepackage{anysize}
\usepackage{amsmath,amssymb,amsfonts}
\usepackage{epsfig,enumerate}
\usepackage{latexsym}
\usepackage{indentfirst, latexsym}
\usepackage{graphics}
\usepackage[all,poly,knot]{xy}
\usepackage[pagebackref]{hyperref}
\usepackage{lineno}

\newcommand{\comment}[1]{}

\usepackage{fancyhdr}
\providecommand{\U}[1]{\protect\rule{.1in}{.1in}}

\numberwithin{equation}{section}

\theoremstyle{plain}
\newtheorem{theorem}{Theorem}[section]  
\newtheorem{lemma}[theorem]{Lemma}      

\newtheorem{proposition}[theorem]{Proposition}
\newtheorem{corollary}[theorem]{Corollary}

\theoremstyle{definition}

\newtheorem{remark}[theorem]{Remark}
\newtheorem{claim}[theorem]{Claim}

\newcounter{proofstep}
\newenvironment{step}[1][]{%
  \refstepcounter{proofstep}%
  \par\vspace{12pt}%
  \noindent\textbf{Step \theproofstep. #1}\quad
}{\par\vspace{12pt}}

\begin{document}


\title[K\"ahler structure  of the total space near a K\"ahler fiber]{K\"ahler structure  of the total space near a K\"ahler fiber}

\author[Jian Chen]{Jian Chen}

\address{Jian Chen, School of Mathematics and Statistics, Central China Normal University,
		Wuhan 430079, People's Republic of China}
\email{jian-chen@whu.edu.cn}

\date{\today}
\subjclass[2020]{Primary 14B12; Secondary 32J27, 14D05, 14D06, 14A10.}
\keywords{Local deformation theory; Compact K\"ahler manifolds, Structure of families, Fibrations,
Varieties and morphisms.}

\begin{abstract}
Motivated by the Kodaira–-Spencer local stability theorem for  K\"ahler structures and by C. Li's study of K\"ahler structures on holomorphic submersions between compact complex manifolds, we establish an equivalent characterization of the K\"ahlerness of the total space near a K\"ahler fiber, in an optimal manner. The proof combines a  $(1,1)$-type lifting argument for flat sections of a local system,  observations on torsion-freeness   and a K\"ahler neighborhood criterion for compact K\"ahler submanifolds.

\end{abstract}

\maketitle
	
\setcounter{tocdepth}{1}
\tableofcontents

\section{Introduction}

As is well known, the K\"ahler stability theorem of Kodaira and Spencer \cite{KS60} asserts that, in a proper holomorphic submersion between complex manifolds, the fibers sufficiently close to a K\"ahler fiber are again K\"ahler. 
However, Proposition \ref{prop-k3} shows that this fiberwise stability property does not extend to the total space: even after shrinking the base around the given base point (over which the fiber is K\"ahler), the restricted total space may fail to be K\"ahler.

Recently, Li \cite{L25} established an elegant equivalent characterization of the K\"ahlerness of the total space for a  holomorphic submersion between compact complex manifolds, which generalizes Blanchard’s criterion for a special class of
isotrivial holomorphic submersions,  
using the Leray filtration and Deligne’s Hodge theory for the polarized variations of Hodge structures.

\begin{theorem}[{\cite[Theorem 1.2]{L25}}]\label{intro-Li-thm}
Let $X, B$ be compact complex manifolds. Assume that there is a holomorphic submersion $\pi: X \to B$.  Then there is a K\"ahler metric on $X$ iff the following conditions are all satisfied:
\begin{enumerate}[\rm{(}1\rm{)}]
\item\label{intro-li-1}
There is an element $[\omega] \in H^0\left(B, R^2 \pi_* \mathbb{R}\right)$ restricts to be a K\"ahler class on $X_b$ for any $b \in B$.
\item\label{intro-li-2}
$d_2[\omega]=0$ in $E_2^{2,1}=H^2\left(B, R^1 \pi_* \mathbb{R}\right)$.
\item\label{intro-li-3} 
$B$ is K\"ahler.
\end{enumerate}
\end{theorem}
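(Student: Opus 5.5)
The plan splits into the two directions. In each direction I use the Leray spectral sequence $E_2^{p,q}=H^p(B,R^q\pi_*\mathbb{R})\Rightarrow H^{p+q}(X,\mathbb{R})$ of the proper submersion $\pi$. I also use the identification of the edge map $H^2(X,\mathbb{R})\to E_\infty^{0,2}\subset E_2^{0,2}=H^0(B,R^2\pi_*\mathbb{R})$ with restriction to fibers.

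For necessity, suppose $\omega_X$ is a K\"ahler form on $X$. Its class restricts on each $X_b$ to the class of $\omega_X|_{X_b}$, which is K\"ahler, so the image of $[\omega_X]$ in $H^0(B,R^2\pi_*\mathbb{R})$ gives (1). Since this image lies in $E_\infty^{0,2}$, all differentials kill it; in particular $d_2[\omega]=0$, which is (2). For (3), I will show $B$ is K\"ahler by fiber integration. Let $r=\dim X_b$ and form $\omega_B:=\pi_*(\omega_X^{r+1})$. This is a closed $(1,1)$-current on $B$, and in fact a smooth form because $\pi$ is a submersion. Positivity is checked pointwise: for a nonzero tangent vector $v$ at $b$, take a horizontal lift. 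Then $\omega_B(v,Jv)$ is the integral over $X_b$ of a positive multiple of $|v^h|^2_{\omega_X}\,\omega_X^r|_{X_b}$ plus nonnegative terms, hence positive. So $\omega_B$ is K\"ahler.

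For sufficiency, I first realize the flat class $[\omega]$ by a closed form. Condition (2) says $[\omega]$ survives to $E_3^{0,2}$. The next obstruction $d_3[\omega]\in E_3^{3,0}$ is a quotient of $H^3(B,\mathbb{R})$. Here I plan to use that $B$ is compact K\"ahler and that the fibers are K\"ahler: by Deligne's degeneration argument (hard Lefschetz relative to the fiberwise K\"ahler class supplied by (1)), the Leray spectral sequence degenerates at $E_2$ once the class exists fiberwise. Failing that, I would adapt Deligne's argument, since the obstruction is exactly $d_2$ and the higher differentials vanish by the same $\mathfrak{sl}_2$ weight argument. Either way we obtain a class $\alpha\in H^2(X,\mathbb{R})$ restricting to $[\omega]$.

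Next comes the main construction. Fiberwise $\alpha|_{X_b}$ is K\"ahler, but a representative of $\alpha$ need not be of type $(1,1)$ or positive on fibers. This is the main obstacle. I will pick a smooth family of fiberwise K\"ahler forms $\omega_b$ in the classes $[\omega]_b$, glued by a partition of unity on $B$ and corrected fiberwise by $\partial\bar\partial$-potentials. I then seek a closed real $2$-form $\eta$ on $X$ with $[\eta]=\alpha$ and $\eta|_{X_b}=\omega_b$. Such an $\eta$ is fiberwise K\"ahler, but its horizontal and mixed components may be bad. Its $(0,2)+(2,0)$ part must be removed. For this I use the $\partial\bar\partial$-lemma on the compact K\"ahler fibers together with Hodge symmetry, checking that the relevant obstruction classes in $H^1(B,R^1\pi_*)$-type terms vanish, or can be absorbed by modifying $\alpha$ by pullbacks from $B$. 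The result is a closed real $(1,1)$-form $\eta$ positive along fibers.

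Finally I take a K\"ahler form $\omega_B$ on $B$ from (3) and set $\omega_X=\eta+C\pi^*\omega_B$ with $C\gg0$. By compactness of $X$, this is positive definite: the vertical directions are controlled by $\eta$, and the mixed and horizontal terms are dominated by $C\pi^*\omega_B$ via Cauchy--Schwarz. I expect the $(1,1)$-correction in the previous step to be the delicate point; the rest is standard.
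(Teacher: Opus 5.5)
The paper does not prove this theorem. It quotes it from Li and only says that Li starts from a fiberwise K\"ahler $(1,1)$-form and corrects it to be closed along the Leray filtration, using Deligne's Hodge theory for polarized variations of Hodge structure. Your necessity direction is fine, including the fiber-integration proof that $B$ is K\"ahler, and so is your final step $\eta+C\pi^*\omega_B$. The sufficiency direction has one incorrect justification and one genuine gap.

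\textbf{The lift of $[\omega]$.} You claim Deligne's argument gives $E_2$-degeneration ``once the class exists fiberwise''. This is false. Deligne's criterion needs the Lefschetz operator to be cup product with a \emph{global} class in $H^2(X,\mathbb R)$, i.e.\ a morphism $R\pi_*\mathbb R\to R\pi_*\mathbb R[2]$; a section of $R^2\pi_*\mathbb R$ does not supply one. If your claim held, condition (2) would be redundant. For a concrete failure, take the Hopf surface $X$ fibered over $\mathbb P^1$ in elliptic curves. It has a fiberwise K\"ahler section of $R^2\pi_*\mathbb R\cong\mathbb R_{\mathbb P^1}$ and a K\"ahler base. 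But $H^2(X,\mathbb R)=0$ and $H^3(\mathbb P^1,\mathbb R)=0$ force $d_2[\omega]\neq 0$.

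What you actually need, $d_3[\omega]=0$, does follow from multiplicativity. Let $r=\dim_{\mathbb C}X_b$.
\begin{itemize}
\item Cup product with $[\omega]^r$ maps $H^3(B,\mathbb R)$ isomorphically onto $H^3(B,R^{2r}\pi_*\mathbb R)$, because the fiber volume is nonzero.
\item It commutes with $d_2$, since $d_2[\omega]=0$.
\item $E_2^{1,2r+1}=0$.
\end{itemize}
Hence $d_2(E_2^{1,1})=0$, $E_3^{3,0}=E_2^{3,0}$ and $E_3^{3,2r}=E_2^{3,2r}$. Then $0=d_3([\omega]^{r+1})=(r+1)[\omega]^r\cdot d_3[\omega]$ gives $d_3[\omega]=0$.

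\textbf{The genuine gap: removing the $(0,2)$-part.} After lifting, the obstruction is the image $q_X(\alpha)\in H^2(X,\mathcal O_X)$. Its component in $H^0(B,R^2\pi_*\mathcal O_X)$ vanishes because each $[\omega]_b$ has type $(1,1)$. That is all the fiberwise $\partial\bar\partial$-lemma and Hodge symmetry can detect. The remaining Leray pieces lie in $E_\infty^{1,1}\subset H^1(B,R^1\pi_*\mathcal O_X)$ and in a quotient of $H^2(B,\mathcal O_B)$. Only the latter can be absorbed by pullbacks from $B$, using the Hodge decomposition of $B$.

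The $E^{1,1}$-piece need not vanish, and pullbacks from $B$ do not touch it. Take $X=B\times F$ with $B,F$ elliptic curves, and $\alpha=[\omega_F]+\beta$ with $\beta\in H^1(B,\mathbb R)\otimes H^1(F,\mathbb R)$ real and having nonzero $H^{0,1}(B)\otimes H^{0,1}(F)$-component. Then $\alpha$ restricts correctly to every fiber, yet no $\alpha+\pi^*\gamma$ is represented by a closed real $(1,1)$-form.

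The fix is to change $\alpha$ by a class in $F^1H^2(X,\mathbb R)$ with prescribed image in $H^1(B,R^1\pi_*\mathbb R)$. This needs two facts:
\begin{itemize}
\item The map $H^1(B,R^1\pi_*\mathbb R)\to H^1(B,R^1\pi_*\mathcal O_X)$ is surjective. This follows from the Deligne--Zucker theorem: for the polarized variation $\mathbb V=R^1\pi_*\mathbb R$ over compact K\"ahler $B$, $H^1(B,\mathbb V)$ carries a pure Hodge structure of weight $2$ with $\mathrm{Gr}^0_F=H^1(B,\mathcal V/F^1\mathcal V)=H^1(B,R^1\pi_*\mathcal O_X)$.
\item The needed classes lift to $H^2(X,\mathbb R)$, which holds because $d_2=0$ on $E_2^{1,1}$, as shown above.
\end{itemize}

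Your route, a topological lift followed by killing a Dolbeault obstruction, is the compact analogue of the proof of Theorem \ref{thm-H11-lift}. There the obstruction reduces to $H^0(S,E)$ only because Cartan's Theorem B kills $H^1(S,R^1\pi_*\mathcal O_X)$ and $H^2(S,\mathcal O_S)$. On a compact base these groups survive, so compactness and K\"ahlerness of $B$ must already enter at this step, not only in the final $C\pi^*\omega_B$.

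\textbf{A smaller omission.} Subtracting $d(\theta+\bar\theta)$ changes the fiber restrictions. You must restore $\eta|_{X_b}=\omega_b$ with a global $\sqrt{-1}\partial\bar\partial\Phi$, where $\Phi(x)=\phi_{\pi(x)}(x)$ is built from fiberwise potentials depending smoothly on $b$. This is possible because the Hodge numbers of the fibers are constant.
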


Note that the compactness of the total space plays an important role in the proof of Theorem \ref{intro-Li-thm}.  In view of this, together with the possible failure of the K\"ahlerness of the total space  near a  K\"ahler fiber, a natural question arises: under what additional conditions can one obtain the K\"ahlerness of the total space (after shrinking the base) near a K\"ahler fiber?

The following equivalent characterization  of the K\"ahlerness of the total space near a K\"ahler fiber gives an answer to this question with optimal assumptions (see Proposition \ref{prop-k3} and Remark \ref{rem-product-k3}).

\begin{theorem}[{= Corollary \ref{cor-local-kahler}}]\label{intro-cor-local-kahler}
Let $\pi:X\to S$ be a proper holomorphic submersion from a complex
manifold $X$ onto a polydisc $S\subset\mathbb C^m$  centered at
$0$. Set $X_t:=\pi^{-1}(t)$ for each $t\in S$. Assume that
$X_0$ is K\"ahler.
Then the following conditions are equivalent:
\begin{enumerate}[\rm{(}1\rm{)}]
\item\label{intro-cond-lk-total}
There exists a polydisc $S_1\Subset S$ centered at $0$ such that
\[
X_{S_1}:=\pi^{-1}(S_1)
\]
is K\"ahler.

\item\label{intro-cond-lk-section}
There exist a polydisc $S_2\Subset S$ centered at $0$ and a section
\[
\sigma\in H^0(S_2,(R^2\pi_*\mathbb R)|_{S_2})
\]
such that, under the canonical identification
\[
(R^2\pi_*\mathbb R)_t\cong H^2(X_t,\mathbb R),
\]
one has
$\sigma_0=[\omega_0]$
for some K\"ahler form $\omega_0$ on $X_0$, and there exists a subset
$U\subset S_2$, dense in the analytic Zariski topology of $S_2$, such that, for
each $t\in U$, the class $\sigma_t\in H^2(X_t,\mathbb R)$
is represented by a smooth real $d$-closed $(1,1)$-form on $X_t$.
\end{enumerate}
\end{theorem}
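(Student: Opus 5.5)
The plan is to prove the two implications separately. The direction (1)$\Rightarrow$(2) is the easy one. The direction (2)$\Rightarrow$(1) has two parts. First, spread the $(1,1)$ condition from the dense set $U$ to all nearby fibers. Second, glue fiberwise Kähler forms into a Kähler form on the total space over a smaller polydisc.

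\emph{(1)$\Rightarrow$(2).} Let $\omega$ be a Kähler form on $X_{S_1}$ and take $S_2:=S_1$. Since $S_1$ is contractible, Ehresmann's theorem makes $R^2\pi_*\mathbb R$ a constant local system on $S_1$. The class $[\omega]\in H^2(X_{S_1},\mathbb R)\cong H^0(S_1,R^2\pi_*\mathbb R)$ therefore gives a section $\sigma$ with $\sigma_t=[\omega|_{X_t}]$. Each $\omega|_{X_t}$ is a Kähler form, hence a $d$-closed real $(1,1)$-form, so we may take $U=S_2$.

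\emph{(2)$\Rightarrow$(1), first step: the $(1,1)$ condition on all nearby fibers.} By Kodaira–Spencer stability, after shrinking $S_2$ every $X_t$ is Kähler and $h^{p,q}(X_t)$ is constant. The Hodge bundles then vary holomorphically, and $\sigma$ is a flat section of $R^2\pi_*\mathbb C\otimes\mathcal O_{S_2}$. Consider the projection of $\sigma$ to $\mathcal H^2/F^1\cong R^2\pi_*\mathcal O_X$. This is a holomorphic section of a locally free sheaf, and here the torsion-freeness observation enters. On $U$ the projection vanishes: if $\sigma_t$ is represented by a closed real $(1,1)$-form, its $(0,2)$ part in Hodge decomposition is zero. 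A holomorphic section vanishing on an analytic-Zariski-dense set vanishes identically, so $\sigma_t\in F^1$ for all $t$. Since $\sigma$ is real, $\sigma_t\in F^1\cap\overline{F^1}=H^{1,1}(X_t,\mathbb R)$, and by the $\partial\bar\partial$-lemma it is represented by a closed real $(1,1)$-form. Shrinking further, $\sigma_t$ is a Kähler class for $t$ near $0$, because the Kähler cone is open and varies continuously; this is the Kodaira–Spencer argument applied to the family of classes.

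\emph{(2)$\Rightarrow$(1), second step: gluing.} I would next produce a $d$-closed real $2$-form $\Omega$ on $X_{S_2}$ with $[\Omega|_{X_t}]=\sigma_t$. Take any closed form representing $\sigma$ under the isomorphism $H^2(X_{S_2})\cong H^0(S_2,R^2\pi_*\mathbb R)$. The aim is to modify $\Omega$ so that its $(0,2)$ part is $\bar\partial$-exact in a family-compatible way. I would realize this as a $(1,1)$-type lifting. Kodaira–Spencer gives a smooth family of Kähler forms $\omega_t$ in the classes $\sigma_t$. One then seeks a real closed form $\Omega$ on a neighborhood of $X_0$ whose restriction to each fiber is $\omega_t$ and which is of type $(1,1)$ there; the obstruction to doing so lies in cohomology of $\mathcal O$ in degree $2$ and $\Omega^1$ in degree $1$ on $X_{S_2}$. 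Over a Stein (polydisc) base these groups equal $H^0(S_2,R^q\pi_*\Omega^p)$, which is torsion-free and controlled fiberwise, and this should kill the obstruction. With a closed real $(1,1)$-form $\Omega$ on $X_{S_2}$ that is positive on $X_0$, the final step is the Kähler neighborhood criterion. The form $\Omega+C\,\pi^*(i\sum dt_j\wedge d\bar t_j)$ is positive on $\pi^{-1}(S_1)$ for a small polydisc $S_1$ and large $C$, by compactness of $X_0$.

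The main obstacle is producing the global closed $(1,1)$ representative $\Omega$ from fiberwise data, that is, the vanishing of the obstruction on the total space rather than fiber by fiber. My first attempt would be to argue via the degenerate Leray spectral sequence over a contractible Stein base, which reduces the obstruction to sections of $R^2\pi_*\mathcal O_X$ and $R^1\pi_*\Omega^1_X$. These vanish or are matched by the first step.
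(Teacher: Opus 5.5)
Your first step of (2)$\Rightarrow$(1), which spreads the $(1,1)$ condition to all nearby fibers, has a genuine gap. You first shrink $S_2$ by Kodaira--Spencer to a polydisc $S_3$ on which all fibers are K\"ahler with constant Hodge numbers. You then use that the projection of $\sigma$ to $\mathcal H^2/F^1$ vanishes on $U$. But $U$ is only assumed dense in the analytic Zariski topology of the \emph{original} $S_2$, and it can miss $S_3$ entirely. For example, $S_2$ minus a closed ball around $0$ is Zariski dense, as is any nonempty open set. Conversely, where $U$ lives the fibers need not be K\"ahler, so $\mathcal H^2/F^1$ does not exist there and your section cannot be evaluated on $U$.

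What is missing is an object defined on all of $S_2$. Take the image $\eta$ of $\sigma$ under $R^2\pi_*\mathbb R\to E:=R^2\pi_*\mathcal O_X$, a section of a coherent sheaf that may have torsion. Let $V\subset S_2$ be a dense analytic Zariski open set where $E$ is locally free and base change is an isomorphism. The compatibility $\operatorname{bc}_t(\operatorname{ev}_t(\eta))=q_t(\sigma_t)$ (Lemma \ref{lem-coeff-fiber}) then gives $\operatorname{ev}_t(\eta)=0$ on $U\cap V$, which is still Zariski dense. Lemma \ref{lem-fiber-torsion} makes $\eta$ a torsion section, so $\eta|_V=0$. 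You cannot shortcut this with the zero set of $\eta|_V$, because that set is analytic only in $V$, not in $S_2$. Only now shrink to $S_3$, where $E$ is locally free. Since $V\cap S_3$ is nonempty and open, the identity theorem gives $\eta|_{S_3}=0$, hence $\sigma_t\in H^{1,1}(X_t,\mathbb R)$ for all $t\in S_3$.

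Your gluing step is also left open exactly where it matters. The only obstruction is the Dolbeault class of $\Omega^{0,2}$ in $H^2(X_{S_3},\mathcal O)\cong H^0(S_3,E)$, by Cartan B and the Leray edge map. By functoriality this class is $\eta|_{S_3}=0$. Writing $\Omega^{0,2}=\bar\partial\theta$, the form $\Omega-d(\theta+\overline{\theta})$ is a closed real $(1,1)$-form with fiber classes $\sigma_t$. No $R^1\pi_*\Omega^1$ obstruction arises, and prescribing the restrictions $\omega_t$ is unnecessary; your sketch of that harder problem is not justified.

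What you do need, and omit, is positivity on $X_0$: the constructed $\widetilde\alpha$ restricts to $X_0$ only within the class $[\omega_0]$. Use the $\partial\bar\partial$-lemma on $X_0$ to write $\omega_0-\widetilde\alpha|_{X_0}=\sqrt{-1}\partial\bar\partial\varphi$. Extend $\varphi$ to $\Phi$ on $X_{S_3}$ and add $\sqrt{-1}\partial\bar\partial\Phi$.

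After that correction, your final step with $C\pi^*(\sqrt{-1}\sum_j dt_j\wedge d\bar t_j)$ is correct. It is simpler than the paper's squared-distance function of Lemma \ref{lem-rho}. For a fiber of a submersion this pullback is already semipositive with kernel exactly the vertical tangent space, so the block-matrix argument in Theorem \ref{thm-pair-kahler} applies verbatim. The paper's version is more general, working for arbitrary compact K\"ahler submanifolds.
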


We now briefly discuss the optimality of condition \eqref{intro-cond-lk-section} in Theorem \ref{intro-cor-local-kahler}.
In view of the Kodaira--Spencer stability theorem for K\"ahler structures, one might expect that condition \eqref{intro-cond-lk-section} in Theorem \ref{intro-cor-local-kahler} could be weakened, 
for instance by requiring the type-$(1,1)$ restriction condition only along a proper analytic subset of the base, while assuming that $\sigma_0$ is a K\"ahler class. However, Proposition \ref{prop-k3} and Remark \ref{rem-product-k3} show that condition \eqref{intro-cond-lk-section} in Theorem \ref{intro-cor-local-kahler} is optimal: even if the type-$(1,1)$ restriction condition is imposed along a divisor in the base, the total space may fail to be K\"ahler near a K\"ahler fiber.

We now explain the strategy of the proof of Theorem \ref{intro-cor-local-kahler}.

As in \cite{L25}, the main difficulty in producing a K\"ahler metric on the total space is to construct a smooth real $d$-closed $(1,1)$-form whose restriction to each fiber represents the prescribed value of a given flat section. In the compact setting of \cite{L25}, the author starts from a globally defined  $(1,1)$-form representing the prescribed fiberwise K\"ahler classes and then corrects this form to be closed step by step along the Leray filtration. This argument uses the compactness assumptions in an essential way (e.g., \cite[Section 3, Step 5]{L25}). In the local situation considered here, the total space  is not compact, so this  argument does not seem to be readily adaptable to our setting.

Motivated by the role of the type-$(1,1)$ restriction condition in the construction of holomorphic line bundles with prescribed  Chern class (e.g., \cite[Theorem 3.2]{C25}), our method here is to use a type-$(1,1)$ restriction  condition to control a Dolbeault obstruction. More precisely, for a flat section of $R^2\pi_*\mathbb R$, the polydisc assumption gives a topological lift to a real deRham class; choosing a smooth real closed representative, we take the Dolbeault class of its $(0,2)$-component and view it as a section of $R^2\pi_*\mathcal O_X$. The type-$(1,1)$ condition on a Zariski dense set of fibers forces the fiber values of this obstruction section to vanish on a dense set. Under natural conditions, this implies the global vanishing of the obstruction and hence yields a smooth real $d$-closed $(1,1)$-form representing the prescribed flat section fiberwisely (see the proof of Theorem \ref{thm-H11-lift}).

 As a direct corollary of Theorem \ref{thm-H11-lift}, we obtain the following equivalent characterization of the liftability of a flat section to a global smooth real $d$-closed $(1,1)$-form.

\begin{theorem}[{= Corollary \ref{cor-H11-tf}}]\label{intro-cor-H11-tf}
Let $\pi:X\to S$ be a proper holomorphic submersion onto a polydisc
$S\subset \mathbb C^m$ from a complex manifold $X$, with fibers denoted by $X_t:=\pi^{-1}(t)$.
Let $\sigma\in H^0(S,R^2\pi_*\mathbb R)$
be a section.   Denote by $\sigma_t\in H^2(X_t,\mathbb R)$
the image of the germ  $\sigma_t$  under the canonical identification
$(R^2\pi_*\mathbb R)_t\cong H^2(X_t,\mathbb R).$ 
Assume that $E:=R^2\pi_*\mathcal O_X$  is torsion-free.  

 Then the following conditions are equivalent:
\begin{enumerate}[\rm{(}1\rm{)}]
\item\label{intro-cond-tf-dense}
There exists a subset $U\subset S$, dense in $S$ in the analytic Zariski topology,
such that, for each $t\in U$, the class
$\sigma_t\in H^2(X_t,\mathbb{R})$ is represented by a smooth real
$d$-closed $(1,1)$-form on $X_t$.

\item\label{intro-cond-tf-global}
There exists a global smooth real $d$-closed $(1,1)$-form
$\widetilde{\alpha}$ on $X$ such that
\[
[\widetilde{\alpha}|_{X_t}]=\sigma_t
\qquad\text{for each }t\in S.
\]
\end{enumerate}
\end{theorem}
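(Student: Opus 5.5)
The implication (2)$\Rightarrow$(1) is immediate: take $U=S$. If $\widetilde\alpha$ is a smooth real $d$-closed $(1,1)$-form on $X$, then its restriction $\widetilde\alpha|_{X_t}$ is a smooth real $d$-closed $(1,1)$-form on $X_t$, and it represents $\sigma_t$ by hypothesis.

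For (1)$\Rightarrow$(2) I will follow the strategy outlined in the introduction, which is the argument behind Theorem \ref{thm-H11-lift}.

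\emph{Step 1: a closed representative.} Since $S$ is a polydisc it is contractible, and by Ehresmann's theorem $X$ deformation retracts onto $X_0$. Hence restriction gives isomorphisms $H^2(X,\mathbb R)\cong H^0(S,R^2\pi_*\mathbb R)\cong H^2(X_0,\mathbb R)$, so $\sigma$ lifts to a class in $H^2(X,\mathbb R)$. I choose a smooth real $d$-closed $2$-form $\alpha$ on $X$ representing this class, so that $[\alpha|_{X_t}]=\sigma_t$ for all $t$.

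\emph{Step 2: the obstruction section.} Write $\alpha=\alpha^{2,0}+\alpha^{1,1}+\alpha^{0,2}$ with $\alpha^{2,0}=\overline{\alpha^{0,2}}$. Comparing bidegrees in $d\alpha=0$ gives $\bar\partial\alpha^{0,2}=0$, so $\alpha^{0,2}$ defines a Dolbeault class
\[
\eta\in H^{0,2}(X)\cong H^2(X,\mathcal O_X).
\]
Since $S$ is Stein, the Leray spectral sequence gives $H^2(X,\mathcal O_X)\cong H^0(S,E)$, so $\eta$ is a section of $E=R^2\pi_*\mathcal O_X$. I then use the base change morphisms $E_t\otimes\mathbb C(t)\to H^2(X_t,\mathcal O_{X_t})$. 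Under these, the image of $\eta$ is $[\alpha^{0,2}|_{X_t}]$, which is the $(0,2)$-part of $\sigma_t$ in the sense of the Fr\"olicher spectral sequence (the map $H^2(X_t,\mathbb C)\to H^2(X_t,\mathcal O_{X_t})$).

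\emph{Step 3: vanishing of $\eta$.} Fix $t\in U$, so that $\sigma_t=[\beta_t]$ with $\beta_t$ a smooth real $d$-closed $(1,1)$-form on $X_t$. Then $\alpha|_{X_t}-\beta_t=d\gamma$, and taking $(0,2)$-parts gives $\alpha^{0,2}|_{X_t}=\bar\partial\gamma^{0,1}$. So the image of $\eta$ in $H^2(X_t,\mathcal O_{X_t})$ vanishes for every $t\in U$.

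I expect the main obstacle to be passing from this to $\eta=0$. The base change map need not be injective at every point, so the fiber images are not the same as the fiber values $\eta(t)\in E\otimes\mathbb C(t)$. My approach is to use Grauert's theorem: $E$ is coherent, and there is a proper analytic subset $Z\subset S$ off which $E$ is locally free and base change is an isomorphism. On $U\setminus Z$, which is still Zariski dense, $\eta(t)=0$ in $E\otimes\mathbb C(t)$. A holomorphic section of a vector bundle that vanishes on a Zariski dense set vanishes on $S\setminus Z$. So $\eta$ is supported on $Z$, hence is a torsion section, and torsion-freeness of $E$ gives $\eta=0$.

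\emph{Step 4: correcting $\alpha$.} Since $\eta=0$, we have $\alpha^{0,2}=\bar\partial\xi$ for a smooth $(0,1)$-form $\xi$ on $X$. Set
\[
\widetilde\alpha:=\alpha-d(\xi+\bar\xi).
\]
This is real, closed, and cohomologous to $\alpha$. Its $(0,2)$-part is $\alpha^{0,2}-\bar\partial\xi=0$, and by reality its $(2,0)$-part also vanishes. Hence $\widetilde\alpha$ is a $(1,1)$-form and $[\widetilde\alpha|_{X_t}]=\sigma_t$ for all $t$, which is (2).
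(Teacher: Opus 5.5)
Your overall route is the paper's. Steps 1, 2 and 4, and the first half of Step 3, are the four steps in the proof of Theorem \ref{thm-H11-lift}, with Lemmas \ref{lem-H2-lift} and \ref{lem-bc-edge} taken for granted. Passing to $U\setminus Z$ is the paper's passage to $W=U\cap V$.

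The gap is in the sentence meant to overcome the obstacle you correctly identified: ``a holomorphic section of a vector bundle that vanishes on a Zariski dense set vanishes on $S\setminus Z$.'' The bundle $E|_{S\setminus Z}$ lives on $S\setminus Z$. The identity principle there therefore needs $U\setminus Z$ to be Zariski dense in $S\setminus Z$, but you only know it is Zariski dense in $S$. This is strictly weaker, because an analytic subset of $S\setminus Z$ need not extend to an analytic subset of $S$. For example, take the unit disc with $Z=\{0\}$. The set $\{1/n: n\geq 2\}$ lies in no proper analytic subset of the disc, yet it lies in the zero set of the nonzero holomorphic function $\sin(\pi/z)$ on the punctured disc. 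So knowing only that the zero set of $\eta|_{S\setminus Z}$ is analytic in $S\setminus Z$ and contains $U\setminus Z$ gives no contradiction.

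What is missing is an argument that uses $\eta$ being a section of the coherent sheaf $E$ over all of $S$, including across $Z$. This is the paper's Lemma \ref{lem-zero-locus}. The ideal $\mathcal I_\eta:=\operatorname{Im}(E^\vee\to\mathcal O_S,\ \varphi\mapsto\varphi(\eta))$ is coherent on $S$. Its zero set is an analytic subset of $S$ containing every $t$ with $\eta(t)=0$ in $E_t\otimes k(t)$. That subset is proper when $\eta\neq 0$, because torsion-freeness makes $E\to E^{\vee\vee}$ injective. The paper packages this as Lemma \ref{lem-fiber-torsion}: it gets $\eta\in H^0(S,\operatorname{Tor}(E))$, hence $\eta|_V=0$, and then uses torsion-freeness again for injectivity of $H^0(S,E)\to H^0(V,E|_V)$. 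Once you insert this lemma, your Step 3 becomes shorter. Since $E$ is torsion-free, applying Lemma \ref{lem-zero-locus} directly to $\eta$ and the set $U\setminus Z$ (Zariski dense in $S$) gives $\eta=0$ at once. Your ``supported on $Z$, hence torsion'' step is then unnecessary.
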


Using a smooth squared-distance-type function whose Levi form is semipositive along the submanifold and strictly positive in the normal directions, we establish in Theorem \ref{thm-pair-kahler} a criterion for the existence of a K\"ahler neighborhood of a compact K\"ahler submanifold.  Then Theorem \ref{intro-cor-local-kahler} follows from  Theorems \ref{intro-cor-H11-tf} and  \ref{thm-pair-kahler}.

The remaining sections are organized as follows. In Section 2, we give  some preliminary observations on torsion-free sheaves. In Section 3, we prove the type-$(1,1)$ lifting result for flat sections of $R^2\pi_*\mathbb R$. In Section 4, we establish a K\"ahler neighborhood criterion for compact K\"ahler submanifolds in a general complex manifold and then prove Theorem \ref{intro-cor-local-kahler}. In Section 5, we construct examples showing the sharpness of the type-$(1,1)$ restriction condition in Theorem \ref{intro-cor-local-kahler}. Finally, in Appendix A, we prove several basic facts on Leray spectral sequences and their compatibility with restriction and base-change morphisms.

\section{Preliminaries}\label{sec-preli}

Unless otherwise stated, throughout this paper: we use the notation $\mathbb{R}_{\bullet}$ to denote the $\mathbb{R}$-valued constant sheaf on a topological space $\bullet$. When no confusion arises, we also write $\mathbb{R}$ to mean $\mathbb{R}_{\bullet}$; for a proper morphism  $\pi: X\to S$,  for a section $\sigma\in H^0(S,R^2\pi_*\mathbb R)$ and a point $t\in S$, we use $\sigma_t$ and $\sigma(t)$ interchangeably to denote the image of the germ of $\sigma$ at $t$ under the canonical identification
\[
(R^2\pi_*\mathbb R)_t\cong H^2(X_t,\mathbb R);
\]
we use the phrase ``$A$ is dense in the analytic Zariski topology of $B$" to mean that the closure of $A$ in the analytic Zariski topology of $B$ is equal to $B$. If $B$ is irreducible, this is equivalent to saying that $A$ is not contained in any proper analytic subset of $B$. By contrast, we use the phrase ``$A$ is a dense analytic Zariski open subset of $B$" in its standard sense: $A$ is an analytic Zariski open subset of $B$ whose complement is a thin analytic subset of $B$.

Note that in general, the  zero (value) locus for a section of a coherent sheaf $\mathcal E$ need not be an analytic subset
of $S$, even when $\mathcal E$ is torsion-free.  However, the  zero locus of a section of a torsion-free sheaf is contained in a proper analytic subset, as shown by the following observation.

\begin{lemma}\label{lem-zero-locus}
Let $S$ be a connected complex manifold, and let $\mathcal E$ be a torsion-free
coherent sheaf on $S$. Let $\sigma\in H^0(S,\mathcal E)$ be a nonzero section.
Then  the zero locus
\[
Z(\sigma):=
\{t\in S\mid \sigma(t)=0\text{ in }\mathcal E_t\otimes_{\mathcal O_{S,t}}k(t)\}
\]
 of $\sigma$ is contained in a proper analytic subset of $S$, where $k(t):=\mathcal{O}_{S, t} / \mathfrak{m}_t \cong \mathbb{C}$.
\end{lemma}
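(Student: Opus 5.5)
The plan is to use that a coherent sheaf is locally free off a proper analytic subset, and then check that $\sigma$ is not identically zero there.

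\textbf{Step 1 (the singular set).} Since $\mathcal E$ is coherent and $S$ is a connected complex manifold, the singular set
\[
\Sigma:=\{t\in S\mid \mathcal E_t \text{ is not a free }\mathcal O_{S,t}\text{-module}\}
\]
is an analytic subset of $S$. Because $\mathcal E$ is torsion-free, $\Sigma$ has codimension at least $2$; in any case it is nowhere dense, since a coherent sheaf is free on a dense Zariski open set. So $\Sigma$ is a proper analytic subset. On $S^\circ:=S\setminus\Sigma$, which is connected because the complement of a proper analytic subset in a connected manifold is connected, $\mathcal E$ is a holomorphic vector bundle. There the fiber $\mathcal E_t\otimes k(t)$ is the usual vector-bundle fiber, and $\sigma|_{S^\circ}$ is a holomorphic section of it.

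\textbf{Step 2 ($\sigma$ survives on $S^\circ$).} We claim $\sigma|_{S^\circ}\neq 0$. Suppose instead $\sigma|_{S^\circ}=0$. Then the support of $\sigma$ is contained in $\Sigma$. Take $t\in\Sigma$ with germ $\sigma_t\neq 0$; such a point exists because $\sigma\neq0$. By the analytic Nullstellensatz there is a nonzero $f\in\mathcal O_{S,t}$ vanishing on $\Sigma$ near $t$ with $f^N\sigma_t=0$. This contradicts torsion-freeness. This step is where torsion-freeness is genuinely used, and it is the main point to get right. The subtlety is that we need $\mathrm{Supp}(\sigma)$ to be the analytic set $\mathrm{Supp}(\mathcal O\sigma)$; this holds because $\mathcal O_S\sigma$ is a coherent subsheaf.

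\textbf{Step 3 (local trivialisation on $S^\circ$).} On $S^\circ$, choose local frames $e_1,\dots,e_r$ of $\mathcal E$ and write $\sigma=\sum_i f_i e_i$. Then $Z(\sigma)\cap S^\circ$ is locally $\{f_1=\dots=f_r=0\}$. Hence $Z(\sigma)\cap S^\circ$ is an analytic subset of $S^\circ$. By Step 2 and the identity theorem on the connected manifold $S^\circ$, it is proper and nowhere dense.

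\textbf{Step 4 (extending across $\Sigma$).} $Z(\sigma)\cap S^\circ$ need not extend analytically across $\Sigma$ directly, so we argue through the determinantal set instead. On all of $S$, consider the coherent subsheaf $\mathcal O_S\sigma\subset\mathcal E$ and the quotient $\mathcal Q:=\mathcal E/\mathcal O_S\sigma$. For $t\in Z(\sigma)$, right-exactness of $\otimes k(t)$ gives
\[
\dim \mathcal Q_t\otimes k(t)=\dim\mathcal E_t\otimes k(t).
\]
Write $r=\operatorname{rank}\mathcal E$. Then $Z(\sigma)\subset\Sigma\cup\{t\mid \dim\mathcal Q\otimes k(t)\ge r\}$. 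The set $A:=\{t\mid\dim \mathcal Q\otimes k(t)\ge r\}$ is analytic by upper semicontinuity of fiber dimension of a coherent sheaf.

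It remains to see that $\Sigma\cup A$ is a proper analytic subset. We have $A\cap S^\circ=Z(\sigma)\cap S^\circ$, which is proper by Step 3. Therefore $A$ does not contain the dense set $S^\circ$, so $A\neq S$. Since $\Sigma$ is also proper and $S$ is irreducible, $\Sigma\cup A$ is a proper analytic subset containing $Z(\sigma)$.
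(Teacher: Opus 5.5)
Your argument is correct, but it takes a different route from the paper's.

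The paper never passes to the locally free locus. It takes the coherent ideal $\mathcal I_\sigma$, defined as the image of the evaluation map $\mathcal E^\vee\to\mathcal O_S$, $\varphi\mapsto\varphi(\sigma)$.
\begin{itemize}
\item Torsion-freeness enters exactly once: injectivity of $\mathcal E\to\mathcal E^{\vee\vee}$ gives $\mathcal I_\sigma\neq 0$.
\item Then $\sigma_t\in\mathfrak m_t\mathcal E_t$ forces $(\mathcal I_\sigma)_t\subset\mathfrak m_t$.
\item Hence $Z(\sigma)\subset V(\mathcal I_\sigma)$, which is a proper analytic subset.
\end{itemize}

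You use torsion-freeness differently. You show that $\sigma$ cannot vanish on the dense open set $S^\circ$, via R\"uckert's Nullstellensatz applied to the coherent sheaf $\mathcal O_S\sigma$. You then enclose $Z(\sigma)$ in the degeneracy locus $A$ of the quotient $\mathcal Q=\mathcal E/\mathcal O_S\sigma$. Both key checks are right:
\begin{itemize}
\item $\dim_{\mathbb C}\mathcal Q_t\otimes k(t)=\dim_{\mathbb C}\mathcal E_t\otimes k(t)$ for $t\in Z(\sigma)$, by right-exactness;
\item $A\cap S^\circ=Z(\sigma)\cap S^\circ$.
\end{itemize}

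The two approaches trade off as follows. The paper's route is shorter and lighter: it needs neither analyticity and nowhere-density of the non-free locus, nor connectedness of its complement, nor the Nullstellensatz, nor semicontinuity of fiber dimension. Yours gives a more concrete picture. It shows $Z(\sigma)$ is itself analytic on the locally free locus, and it encloses $Z(\sigma)$ globally in a Fitting-type degeneracy set of $\mathcal E/\mathcal O_S\sigma$.

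One small simplification to your Step 4: the function $t\mapsto\dim_{\mathbb C}\mathcal E_t\otimes k(t)$ is upper semicontinuous and equals $r$ on the dense set $S^\circ$, so it is $\geq r$ everywhere. Therefore $Z(\sigma)\subset A$ already, and adjoining $\Sigma$ is unnecessary.
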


\begin{proof}
Consider the evaluation morphism
\[
\operatorname{ev}_{\sigma}\colon
\mathcal E^\vee:=\mathcal H om_{\mathcal O_S}(\mathcal E,\mathcal O_S)
\to \mathcal O_S,
\qquad
\varphi\longmapsto \varphi(\sigma),
\]
and let $\mathcal I_{\sigma}:=\operatorname{Im}(\operatorname{ev}_{\sigma})$.
Then $\mathcal I_{\sigma}$ is a coherent ideal sheaf. Since $\mathcal E$ is
torsion-free, the natural morphism
$\iota:\mathcal E\to \mathcal E^{\vee\vee}$
is injective. Under the identification
$\mathcal E^{\vee\vee}:=\mathcal H om_{\mathcal O_S}(\mathcal E^\vee,\mathcal O_S)$,
the image of $\sigma$ under $\iota$ is precisely $\operatorname{ev}_{\sigma}$. Thus
$\sigma\neq 0$ implies $\operatorname{ev}_{\sigma}\neq 0$, and consequently
$\mathcal I_{\sigma}\neq 0$.

For each $t\in Z(\sigma)$, we have
$\sigma_t\in\mathfrak m_t\mathcal E_t$. Therefore, for each
$\varphi\in\operatorname{Hom}_{\mathcal O_{S,t}}(\mathcal E_t,\mathcal O_{S,t})$,
one has $\varphi(\sigma_t)\in\mathfrak m_t$. Thus
$(\mathcal I_{\sigma})_t\subset\mathfrak m_t$, so $t\in \operatorname{Supp}(\mathcal O_S/\mathcal I_{\sigma})$.
Hence 
\[Z(\sigma)\subset \operatorname{Supp}(\mathcal O_S/\mathcal I_{\sigma}).\]
Since $\mathcal I_{\sigma}$ is coherent, $\operatorname{Supp}(\mathcal O_S/\mathcal I_{\sigma})$ is an analytic
subset of $S$. Moreover, $\mathcal I_{\sigma}\neq 0$ and $S$ is connected, so
$V(\mathcal I_{\sigma})\neq S$. Therefore $V(\mathcal I_{\sigma})$ is a proper
analytic subset of $S$ containing $Z(\sigma)$.
\end{proof}

As a result, we obtain the following observation.

\begin{lemma}\label{lem-fiber-torsion}
Let $S\subset \mathbb C^m$ be a polydisc, let $F$ be a coherent sheaf
on $S$, and let $A\subset S$ be a subset that  is not contained in any proper analytic subset of $S$. 
Let  $s\in H^0(S,F)$
be a section such that
\[
s(t)=0
\qquad\text{in }F_t\otimes_{\mathcal O_{S,t}}k(t)
\]
for each $t\in A$.   Then  $s\in H^0(S,\operatorname{Tor}(F))$.
\end{lemma}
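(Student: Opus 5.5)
The plan is to pass to the torsion-free quotient and apply Lemma \ref{lem-zero-locus}. Let $\operatorname{Tor}(F)\subset F$ be the torsion subsheaf; it is coherent, and $G:=F/\operatorname{Tor}(F)$ is a torsion-free coherent sheaf on $S$. Write $q\colon F\to G$ for the quotient map and $\bar s:=q(s)\in H^0(S,G)$. The goal becomes showing $\bar s=0$, because exactness of
\[
0\to H^0(S,\operatorname{Tor}(F))\to H^0(S,F)\to H^0(S,G)
\]
then gives $s\in H^0(S,\operatorname{Tor}(F))$.

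The first step is to check that $\bar s$ vanishes pointwise on $A$. The map $q$ induces a surjection $F_t\otimes k(t)\to G_t\otimes k(t)$, since tensoring is right exact, and this surjection sends $s(t)$ to $\bar s(t)$. So $s(t)=0$ forces $\bar s(t)=0$ for every $t\in A$, which gives $A\subset Z(\bar s)$.

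Next I argue by contradiction. Suppose $\bar s\neq 0$. The polydisc $S$ is a connected complex manifold, so Lemma \ref{lem-zero-locus} applies and $Z(\bar s)$ lies in a proper analytic subset of $S$. Then so does $A$, contradicting the hypothesis. Hence $\bar s=0$.

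I expect no step to be a genuine obstacle. The only points needing care are standard facts: that $\operatorname{Tor}(F)$ is coherent, so that $G$ is coherent, and that $F/\operatorname{Tor}(F)$ is torsion-free. The latter holds because $\mathcal O_{S,t}$ is an integral domain. Connectedness of the polydisc is what makes Lemma \ref{lem-zero-locus} applicable.
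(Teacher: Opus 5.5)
Your proposal is correct and follows essentially the same route as the paper's proof. You pass to $G=F/\operatorname{Tor}(F)$ and show $\bar s$ vanishes at every $t\in A$; the paper phrases this as $q_t(\mathfrak m_tF_t)\subset\mathfrak m_tG_t$. You then apply Lemma \ref{lem-zero-locus} to obtain $\bar s=0$, and conclude $s\in H^0(S,\operatorname{Tor}(F))$ from the left exact sequence $0\to H^0(S,\operatorname{Tor}(F))\to H^0(S,F)\to H^0(S,G)$.
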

\begin{proof}
Set
\[
T:=\operatorname{Tor}(F),
\qquad
G:=F/T,
\]
and let \(q:F\to G\) be the natural quotient morphism. Denote by
\[
\theta:H^0(S,F)\to H^0(S,G)
\]
the induced morphism on global sections, and set
\[
\overline{s}:=\theta(s).
\]
For each \(t\in S\), let
\[
q_t:F_t\to G_t
\]
be the morphism on stalks induced by \(q\).

For each $t\in A$, the assumption
\[
s(t)=0
\qquad\text{in }F_t\otimes_{\mathcal O_{S,t}}k(t)
\]
is equivalent to $s_t\in\mathfrak m_tF_t$. Hence
\[
\overline{s}_t=q_t(s_t)\in q_t(\mathfrak m_tF_t)\subset \mathfrak m_tG_t.
\]
Therefore
\[
\overline{s}(t)=0
\qquad\text{in }G_t\otimes_{\mathcal O_{S,t}}k(t)
\]
for each $t\in A$.

Applying Lemma \ref{lem-zero-locus} to
the torsion-free  sheaf $G$ gives that  $\overline{s}=0.$
The  exact sequence
\[
0\to T\to F\to G\to 0
\]
gives a left exact sequence
\[
0\to H^0(S,T)\to H^0(S,F)
\to H^0(S,G).
\]
Thus $\ker\theta=H^0(S,T)$ as a subspace of $H^0(S,F)$. Since $\theta(s)=0$,
we get
\[
s\in H^0(S,T)=H^0(S,\operatorname{Tor}(F)).
\]
\end{proof}

\section{Lifting flat sections to real $d$-closed $(1,1)$-forms}

In this section, combining some observations on torsion sheaves with a method (via solving a $\bar\partial$-equation) for killing the $(0,2)$ and $(2,0)$ components of a topological lift of $\sigma$, we lift a flat section $\sigma$ whose restrictions to many fibers (not necessarily near the central fiber) are of type $(1,1)$ to a real $d$-closed $(1,1)$-form on the total space.

\begin{theorem}\label{thm-H11-lift}
Let $\pi:X\to S$ be a proper holomorphic submersion onto a polydisc
$S\subset \mathbb C^m$ centered at $0$ from a complex manifold $X$.
For each $t\in S$, set
\[
X_t:=\pi^{-1}(t)
\text{  and  }
k(t):=\mathcal O_{S,t}/\mathfrak m_t\cong \mathbb C.
\]
Let $\sigma\in H^0(S,R^2\pi_*\mathbb R)$
be a section, and for each $t\in S$, still denote by $\sigma_t\in H^2(X_t,\mathbb R)$
the image of the germ of $\sigma$ at $t$ under the canonical identification
\[
(R^2\pi_*\mathbb R)_t\cong H^2(X_t,\mathbb R).
\]
Set  $E:=R^2\pi_*\mathcal O_X.$
Let $V\subset S$ be a nonempty analytic Zariski open subset 
 such that
$E|_V$ is locally free, and, for each $t\in V$, the base-change morphism
\[
\operatorname{bc}_t:
E_t\otimes_{\mathcal O_{S,t}}k(t)
\to
H^2(X_t,\mathcal O_{X_t})
\]
is an isomorphism (such a $V$ exists).

Assume moreover that the following conditions hold:
\begin{enumerate}[\rm{(}1\rm{)}]
\item\label{cond-H11-det}
The restriction homomorphism
\[
H^0(S,E)\to H^0(V,E|_V)
\]
is injective.

\item\label{cond-H11-dense}
There exists a subset $W\subset V$, dense (equivalently, not contained in a proper analytic subset) in $S$ in the analytic Zariski topology,
such that, for each $t\in W$, the class $\sigma_t$ is represented by a smooth real
$d$-closed $(1,1)$-form on $X_t$.
\end{enumerate}
Then there exists a global smooth real $d$-closed $(1,1)$-form
$\widetilde\alpha$ on $X$ such that
\[
[\widetilde\alpha|_{X_t}]=\sigma_t
\qquad\text{for each }t\in S.
\]
\end{theorem}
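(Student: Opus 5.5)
The plan is to follow the strategy sketched in the introduction. First, since $S$ is a polydisc, it is contractible and $X$ deformation retracts onto $X_0$. Hence the restriction maps $H^2(X,\mathbb R)\to H^0(S,R^2\pi_*\mathbb R)\to H^2(X_t,\mathbb R)$ are isomorphisms. Concretely, the Leray spectral sequence degenerates because $H^p(S,R^q\pi_*\mathbb R)=0$ for $p>0$, and $R^2\pi_*\mathbb R$ is a constant sheaf by Ehresmann's theorem. So $\sigma$ lifts to a class in $H^2(X,\mathbb R)$, and by de Rham's theorem we may choose a smooth real $d$-closed $2$-form $\alpha$ on $X$ with $[\alpha|_{X_t}]=\sigma_t$ for all $t$.

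Decompose $\alpha=\alpha^{2,0}+\alpha^{1,1}+\alpha^{0,2}$ with $\alpha^{2,0}=\overline{\alpha^{0,2}}$. Closedness gives $\bar\partial\alpha^{0,2}=0$, so $\alpha^{0,2}$ defines a Dolbeault class $s\in H^{0,2}_{\bar\partial}(X)\cong H^2(X,\mathcal O_X)$. Because $S$ is Stein, the Leray spectral sequence for $\mathcal O_X$ and the coherent sheaves $R^q\pi_*\mathcal O_X$ (Grauert) degenerates, which gives $H^2(X,\mathcal O_X)\cong H^0(S,E)$. We view $s$ as a section of $E$. By the compatibility of Leray/base change with restriction (the facts proved in the Appendix), $\operatorname{bc}_t(s(t))$ equals the Dolbeault class of $\alpha^{0,2}|_{X_t}$ in $H^2(X_t,\mathcal O_{X_t})$.

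Next, for $t\in W$, let $\beta_t$ be a real closed $(1,1)$-form on $X_t$ representing $\sigma_t$. Then $\alpha|_{X_t}-\beta_t=d\gamma$ for a real $1$-form $\gamma$. Taking $(0,2)$-parts gives $\alpha^{0,2}|_{X_t}=\bar\partial\gamma^{0,1}$, so this class vanishes in $H^2(X_t,\mathcal O_{X_t})$. Since $t\in W\subset V$, $\operatorname{bc}_t$ is an isomorphism, so $s(t)=0$ in $E_t\otimes k(t)$. On $V$ the sheaf $E|_V$ is locally free, hence torsion-free. The set $W$ is not contained in any proper analytic subset of $S$; assuming $V$, and hence $S$, is connected, $W$ is not contained in any proper analytic subset of $V$ either, because the closure of a proper analytic subset of $V$ together with $S\setminus V$ would be a proper analytic subset of $S$ (Remmert–Stein issues aside, I would argue via irreducibility of $V$). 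Lemma \ref{lem-fiber-torsion} applied on $V$ then gives $s|_V\in H^0(V,\operatorname{Tor}(E|_V))=0$. By condition \eqref{cond-H11-det}, $s=0$ in $H^0(S,E)$.

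Finally, $s=0$ means $\alpha^{0,2}=\bar\partial\eta$ for a smooth $(0,1)$-form $\eta$ on $X$. Set
\[
\widetilde\alpha:=\alpha-d(\eta+\bar\eta).
\]
Its $(0,2)$-part is $\alpha^{0,2}-\bar\partial\eta=0$, and since $\widetilde\alpha$ is real, its $(2,0)$-part vanishes as well. Thus $\widetilde\alpha$ is a real $d$-closed $(1,1)$-form. It differs from $\alpha$ by an exact form, so $[\widetilde\alpha|_{X_t}]=\sigma_t$ for every $t$. The main obstacle is the identification step: making rigorous that the Dolbeault class of $\alpha^{0,2}$, viewed in $H^0(S,E)$, specializes under $\operatorname{bc}_t$ to the fiberwise Dolbeault class. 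This requires compatibility of the edge map $H^2(X,\mathcal O_X)\to H^0(S,R^2\pi_*\mathcal O_X)$ with restriction to fibers, which is exactly what the Appendix on Leray spectral sequences is meant to supply. A secondary point is the passage from Zariski density in $S$ to density in $V$ needed to apply Lemma \ref{lem-fiber-torsion}.
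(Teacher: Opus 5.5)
The overall architecture of your proposal is the paper's: lift $\sigma$ topologically via the degenerate Leray sequence, take the Dolbeault obstruction $[\alpha^{0,2}]\in H^2(X,\mathcal O_X)\cong H^0(S,E)$, show it vanishes fiberwise on $W$ via the edge/base-change compatibility (this is Lemma \ref{lem-bc-edge}), then kill $\alpha^{0,2}$ by subtracting $d(\eta+\bar\eta)$. There is, however, a genuine gap where you pass from $s(t)=0$ for $t\in W$ to $s|_V=0$.

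You apply the torsion lemma on $V$. That needs $W$ to lie in no proper analytic subset of $V$. You justify this by saying that the closure in $S$ of a proper analytic subset of $V$, together with $S\setminus V$, is a proper analytic subset of $S$. This is false. Take $m=1$, let $S$ be the unit disc with $S\setminus V=\{0\}$, and let $W=\{1/n: n\ge 2\}$.
\begin{itemize}
\item $W$ is closed and discrete in $V$, so it is a proper analytic subset of $V$.
\item $W$ is nevertheless dense in $S$ in the analytic Zariski topology. Any analytic subset of the disc containing $W$ contains the accumulation point $0$. It is then not discrete, so it is the whole disc.
\end{itemize}
Remmert--Stein does not apply because $\dim W$ does not exceed $\dim(S\setminus V)$, and irreducibility of $V$ does not help either, since the punctured disc is irreducible. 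So density in $S$ does not transfer to density in $V$, and your argument does not cover such $W$, which the hypotheses allow.

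The fix is to reverse the order, as the paper does.
\begin{enumerate}
\item Apply Lemma \ref{lem-fiber-torsion} on the polydisc $S$ itself to the global section $s\in H^0(S,E)$. This uses only the hypothesis that $W$ is dense in $S$. The zero locus of the image of $s$ in $E/\operatorname{Tor}(E)$ is controlled by the zero set of a coherent ideal defined on all of $S$, so no extension problem arises.
\item This gives $s\in H^0(S,\operatorname{Tor}(E))$.
\item Since $E|_V$ is locally free, $\operatorname{Tor}(E)|_V=0$, so $s|_V=0$.
\item Condition (1) then forces $s=0$.
\end{enumerate}
With this change the rest of your argument goes through unchanged.
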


\begin{proof}
\setcounter{proofstep}{0}

We first justify the parenthetical existence of  the  nonempty analytic Zariski open subset  $V$  in  Theorem \ref{thm-H11-lift}.
By Grauert's direct image theorem, $E$ is coherent.   By Grauert's upper semicontinuity theorem in the analytic Zariski topology
(e.g., \cite[\S 10.5.4]{GR84}), the function
$t\mapsto h^2(X_t,\mathcal O_{X_t})$ can jump only along a proper analytic
subset of $S$. Thus there exists a nonempty analytic Zariski open subset
$V\subset S$ such that $h^2(X_t,\mathcal O_{X_t})$ is constant for
$t\in V$.
By  Grauert's base-change theorem (e.g., \cite[Chapter I, (8.5) Theorem]{BHPV04}), 
for each $t\in V$, the base-change morphism
\[
\operatorname{bc}_t:
E_t\otimes_{\mathcal O_{S,t}}k(t)
\to
H^2(X_t,\mathcal O_{X_t})
\]
is an isomorphism and $E|_V$ is locally free.

The rest of the proof proceeds in the following four steps.  The strategy is to lift $\sigma$ topologically and then  kill the $(0,2)$ and $(2,0)$ components of  the topological lift of $\sigma$.

\begin{step}[Lifting  the flat section $\sigma$ to a $2$-class ${[\alpha_0]}$]\label{step-lift-1}

By Ehresmann's  theorem, $\pi$ is a $C^\infty$ locally trivial fibration.
Therefore
\[
V^q:=R^q\pi_*\mathbb R
\]
is a finite-rank local system on $S$ for each $q\geq 0$.
Since $S$ is a polydisc, it is contractible. Hence the local system $V^q$ is constant for each $q$. Thus
\[
V^q\cong \underline{V_q},
\qquad
V_q:=\left(R^q\pi_*\mathbb R\right)_0\cong H^q(X_0,\mathbb R).
\]

Note that the constant sheaf $\underline{V_q}$ admits the de Rham type fine resolution
\[
0 \to \underline{V_q}
\to \mathcal A_S^0\otimes_{\mathbb R}V_q
\xrightarrow{\,d\,}
\mathcal A_S^1\otimes_{\mathbb R}V_q
\xrightarrow{\,d\,}\cdots,
\]
where $\mathcal A_S^k$ denotes the sheaf of germs of  smooth real-valued $k$-forms on $S$.
Therefore
\[
H^p(S,R^q\pi_*\mathbb R)
=
H^p(S,\underline{V_q})
\cong
H^p\bigl(\Gamma(S,\mathcal A_S^\bullet\otimes_{\mathbb R}V_q)\bigr).
\]
Since $V_q$ is finite-dimensional over $\mathbb R$, we have
\[
H^p\bigl(\Gamma(S,\mathcal A_S^\bullet\otimes_{\mathbb R}V_q)\bigr)
\cong
H^p\bigl(\Gamma(S,\mathcal A_S^\bullet)\bigr)\otimes_{\mathbb R}V_q
\cong
H^p_{\mathrm{dR}}(S,\mathbb R)\otimes_{\mathbb R}V_q.
\]
Since $S$ is starshaped, the Poincar\'e lemma gives
\[
H^p_{\mathrm{dR}}(S,\mathbb R)=0
\qquad\text{for each }p\geq 1.
\]
Thus
\begin{equation}\label{formu-vani}
H^p(S,R^q\pi_*\mathbb R)=0
\qquad\text{for each }p\geq 1\text{ and each }q\geq 0.
\end{equation}
Consequently, applying Lemma \ref{lem-leray-edge}  implies that the  edge homomorphism
\[
\lambda_2: H^2( X,\mathbb R)
\to
H^0(S,R^2\pi_*\mathbb R)
\]
is an isomorphism.

Let  $c:=\lambda_2^{-1}(\sigma).$  By deRham isomorphism theorem and Lemma \ref{lem-H2-lift}, we may 
choose a smooth real $d$-closed $2$-form $\alpha_0$ on $X$ representing $c$ such that 
\[
[\alpha_0|_{X_t}]=\sigma_t.
\]

\end{step}

\begin{step}[The  obstruction to killing the $(0,2)$-component of $\alpha_0$]

Since $d\alpha_0=0$, the $(0,3)$-component of $d\alpha_0$ gives $\bar\partial\alpha_0^{0,2}=0.$
Hence $\alpha_0^{0,2}$ defines a Dolbeault class
\[
[\alpha_0^{0,2}]\in H^2(X,\mathcal O_X).
\]
Note that the class $[\alpha^{0,2}_0]$ is independent of the chosen real closed representative $\alpha_0$ of $c$. Indeed, replacing $\alpha_0$ by $\alpha_0+d\beta$ changes $\alpha^{0,2}_0$ by $\overline{\partial}\beta^{0,1}$.

For each $q\geq 0$, Grauert's direct image theorem implies that
$R^q\pi_*\mathcal O_X$ is coherent. Since $S$ is Stein, Cartan's theorem B gives
\[
H^a(S,R^q\pi_*\mathcal O_X)=0
\qquad\text{for each }a>0\text{ and each }q\geq 0.
\]
Therefore Lemma \ref{lem-leray-edge} gives an isomorphism  
\[
\rho:H^2(X,\mathcal O_X)\xrightarrow{\cong}H^0(S,E).
\]
Set
\[
\nu:=\rho([\alpha_0^{0,2}])\in H^0(S,E).
\]

Thus the Dolbeault cohomology class $[\alpha^{0,2}_0]$ (or $\nu$) is exactly the obstruction to modifying $\alpha_0$ by a real exact form so that the resulting representative has no $(0,2)$-component.

\end{step}

\begin{step}[Vanishing of the obstruction section $\nu$]\label{H11lift-step3}

In this step, we  prove that $\nu=0.$
Fix any $t\in W$. By condition \eqref{cond-H11-dense}, choose a smooth real
$d$-closed $(1,1)$-form $\eta_t$ on $X_t$ such that $[\eta_t]=\sigma_t.$
Since
\[
[\alpha_0|_{X_t}]=\sigma_t,
\]
there exists a smooth real $1$-form $\gamma_t$ on $X_t$ such that
\[
\alpha_0|_{X_t}-\eta_t=d\gamma_t.
\]
Taking the $(0,2)$-component gives
\[
\alpha_0^{0,2}|_{X_t}
=
\bar\partial\gamma_t^{0,1}.
\]
Thus
\[
[\alpha_0^{0,2}|_{X_t}]=0
\qquad\text{in }H^2(X_t,\mathcal O_{X_t}).
\]

By Lemma \ref{lem-bc-edge},  this restriction class is equal to 
\[
\operatorname{bc}_t(\operatorname{ev}_t(\nu)).
\]
Since $t\in W\subset V$, the morphism $\operatorname{bc}_t$ is an isomorphism. Hence the value  of $\nu$ at $t\in W$ vanishes:
\[
\operatorname{ev}_t(\nu)=0
\qquad\text{in }E_t\otimes_{\mathcal O_{S,t}}k(t)
\]
for each $t\in W$.

Since $W$ is dense in $S$ in the analytic Zariski topology,
Lemma \ref{lem-fiber-torsion}  implies that
\[
\nu\in H^0(S,\operatorname{Tor}(E)).
\]
Since $E|_V$ is locally free, one has
\[
\operatorname{Tor}(E)|_V=0.
\]
Thus
\[
\nu|_V=0\in H^0(V,E|_V).
\]
By condition  \eqref{cond-H11-det}, the restriction homomorphism
\[
H^0(S,E)\to H^0(V,E|_V)
\]
is injective. Therefore
\[
\nu=0\in H^0(S,E).
\]
Since $\rho$ is an isomorphism, it follows that
\[
[\alpha_0^{0,2}]=0
\qquad\text{in }H^2(X,\mathcal O_X).
\]
\end{step}

\begin{step}[Construction of the desired $(1,1)$-form]
By the Dolbeault isomorphism theorem, there exists a smooth $(0,1)$-form $\theta$ on $X$ such that
\[
\bar\partial\theta=\alpha_0^{0,2}.
\]
Define
\[
\widetilde\alpha:=\alpha_0-d(\theta+\overline{\theta}).
\]
Then $\widetilde\alpha$ is a smooth real $d$-closed $2$-form on $X$, and
$\widetilde\alpha-\alpha_0$ is $d$-exact. Hence
\[
[\widetilde\alpha|_{X_t}]
=
[\alpha_0|_{X_t}]
=
\sigma_t
\qquad\text{for each }t\in S.
\]

The $(0,2)$-component of $d\theta$ is $\bar\partial\theta$, while
$d\overline{\theta}$ has no $(0,2)$-component. Therefore
\[
\widetilde\alpha^{0,2}
=
\alpha_0^{0,2}-\bar\partial\theta
=
0.
\]
Since $\widetilde\alpha$ is real, one also has
\[
\widetilde\alpha^{2,0}
=
\overline{\widetilde\alpha^{0,2}}
=
0.
\]
Thus the only possible nonzero component of $\widetilde\alpha$ is its $(1,1)$-component.
Therefore $\widetilde\alpha$ is a smooth real $d$-closed $(1,1)$-form.
This completes the proof.
\end{step}
\end{proof}

When $R^2\pi_*\mathcal O_X$ is torsion free, we obtain the following equivalent characterization of the liftability of a flat section to a global smooth real $d$-closed $(1,1)$-form.

\begin{corollary}\label{cor-H11-tf}
Let $\pi:X\to S$ be a proper holomorphic submersion onto a polydisc
$S\subset \mathbb C^m$ from a complex manifold $X$, with fibers denoted by $X_t:=\pi^{-1}(t)$.
Let $\sigma\in H^0(S,R^2\pi_*\mathbb R)$
be a section.   Denote by $\sigma_t\in H^2(X_t,\mathbb R)$
the image of the germ  $\sigma_t$  under the canonical identification
$(R^2\pi_*\mathbb R)_t\cong H^2(X_t,\mathbb R).$ 
Assume that $E:=R^2\pi_*\mathcal O_X$  is torsion-free.  

 Then the following conditions are equivalent:
\begin{enumerate}[\rm{(}1\rm{)}]
\item\label{cond-tf-dense}
There exists a subset $U\subset S$, dense in $S$ in the analytic Zariski topology,
such that, for each $t\in U$, the class
$\sigma_t\in H^2(X_t,\mathbb{R})$ is represented by a smooth real
$d$-closed $(1,1)$-form on $X_t$.

\item\label{cond-tf-global}
There exists a global smooth real $d$-closed $(1,1)$-form
$\widetilde{\alpha}$ on $X$ such that
\[
[\widetilde{\alpha}|_{X_t}]=\sigma_t
\qquad\text{for each }t\in S.
\]
\end{enumerate}
\end{corollary}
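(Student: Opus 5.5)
The implication \eqref{cond-tf-global}$\Rightarrow$\eqref{cond-tf-dense} is immediate: if $\widetilde\alpha$ is a global smooth real $d$-closed $(1,1)$-form with $[\widetilde\alpha|_{X_t}]=\sigma_t$ for every $t\in S$, then for each $t$ the restriction $\widetilde\alpha|_{X_t}$ is again a smooth real $d$-closed form on $X_t$. It is of type $(1,1)$, because $X_t\hookrightarrow X$ is a complex submanifold and pullback by holomorphic maps preserves bidegree. So we may take $U=S$, which is trivially dense in the analytic Zariski topology.

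For \eqref{cond-tf-dense}$\Rightarrow$\eqref{cond-tf-global} I will apply Theorem \ref{thm-H11-lift}, so the work is to produce a set $V$ and a set $W$ satisfying its hypotheses. As noted at the start of the proof of that theorem, there is a nonempty analytic Zariski open subset $V\subset S$ on which $E|_V$ is locally free and every $\operatorname{bc}_t$, $t\in V$, is an isomorphism. Its complement $S\setminus V$ is a proper analytic subset of the connected manifold $S$, so it is thin.

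\emph{Condition \eqref{cond-H11-det}.} Let $s\in H^0(S,E)$ with $s|_V=0$. Then $\operatorname{Supp}(s)$ is contained in the nowhere dense analytic set $S\setminus V$. For each $t\in S\setminus V$, the germ $s_t$ is annihilated by some power of a nonzero holomorphic function $f$ vanishing on $S\setminus V$ near $t$; one gets this from the R\"uckert Nullstellensatz applied to the coherent subsheaf $\mathcal O_S\cdot s$, whose annihilator ideal has zero locus inside $\{f=0\}$. Since $f^N\neq 0$ in the domain $\mathcal O_{S,t}$, the germ $s_t$ is a torsion element. Torsion-freeness of $E$ then forces $s_t=0$, hence $s=0$.

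\emph{Condition \eqref{cond-H11-dense}.} Set $W:=U\cap V$. I need $W$ to be dense in the analytic Zariski topology. Suppose instead that $W\subset Z$ for a proper analytic subset $Z$. Then $U\subset Z\cup(S\setminus V)$, which is a finite union of proper analytic subsets of the irreducible space $S$ and hence again a proper analytic subset. This contradicts the density of $U$. By hypothesis \eqref{cond-tf-dense}, every $t\in W\subset U$ has $\sigma_t$ represented by a smooth real $d$-closed $(1,1)$-form.

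With both conditions verified, Theorem \ref{thm-H11-lift} produces the desired global form $\widetilde\alpha$. The only step that is not formal is the injectivity in condition \eqref{cond-H11-det}. The key observation there is that a section of a coherent sheaf supported on a thin analytic set is torsion, and this is exactly where torsion-freeness of $E$ enters.
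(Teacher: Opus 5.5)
Your proof is correct and follows the paper's route: you reduce to Theorem \ref{thm-H11-lift}, take $W=U\cap V$ with the same density argument, and verify injectivity of $H^0(S,E)\to H^0(V,E|_V)$ using torsion-freeness. The only difference is that you show directly that a section vanishing on $V$ is torsion (annihilator ideal of $\mathcal O_S\cdot s$ plus the R\"uckert Nullstellensatz), whereas the paper obtains this from Lemma \ref{lem-fiber-torsion} applied to the Zariski-dense set $V$; both arguments are valid.
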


\begin{remark}
    Note that if a fiber $X_b$ satisfies $h^{0,1}$-invariance, i.e.,  the function
$t\mapsto h^1(X_t,\mathcal{O}_{X_t})$ is locally constant in a
neighbourhood of $b$ (e.g., when the Fr\"olicher spectral sequence of $X_b$ degenerates at $E_1$), then  $R^2\pi_*\mathcal{O}_X$ is torsion-free near $b$ (\cite[p. 211, \S 10.5.5]{GR84}).
\end{remark}

\begin{proof}
Condition \eqref{cond-tf-global} implying condition
\eqref{cond-tf-dense}  is clear.
Conversely, assume condition \eqref{cond-tf-dense}.

As in the first paragraph of the proof of Theorem \ref{thm-H11-lift}, the dense analytic Zariski open subset $V$ appearing in Theorem \ref{thm-H11-lift} exists: $E|_V$ is locally free and,
for each $t\in V$, the base-change morphism
\[
\operatorname{bc}_t:
E_t\otimes_{\mathcal O_{S,t}}k(t)
\to
H^2(X_t,\mathcal O_{X_t})
\]
is an isomorphism. Moreover, the complement 
\[B:= S\setminus V\]
is a proper analytic subset of $S$.

We will apply Theorem \ref{thm-H11-lift} to give the desired 
  $\widetilde{\alpha}$ in condition \eqref{cond-tf-global}.
We now verify condition \eqref{cond-H11-det} of Theorem \ref{thm-H11-lift}.
Let $\tau\in H^0(S,E)$
be a section such that $\tau|_V=0.$
In particular, for each $t\in V$, the germ $\tau_t$ is zero in $E_t$ and thus the value
\[
\operatorname{ev}_t(\tau)=0
\quad\text{in }E_t\otimes_{\mathcal{O}_{S,t}}k(t)
\qquad\text{for each }t\in V.
\]
Applying Lemma \ref{lem-fiber-torsion} gives that
\[
\tau\in H^0(S,\operatorname{Tor}(E)).
\]
Since $E$ is torsion-free, one has $\tau=0.$
Therefore the restriction homomorphism
\[
H^0(S,E)\to H^0(V,E|_V)
\]
is injective.

Set
\[
W:=U\cap V.
\]
We claim that $W$ is dense in $S$ in the analytic Zariski topology.   In fact, if $W$ were not dense in $S$, then
there would exist a proper analytic subset $A\subsetneq S$ such that $W\subset A.$
Thus $U\subset A\cup B.$
However $A\cup B$ is  a proper
analytic subset of $S$. This contradicts the analytic Zariski density of $U$.
Therefore $W$ is dense in $S$ in the analytic Zariski topology.

For each $t\in W$, one has $t\in U$, and hence $\sigma_t$ is represented by a
smooth real $d$-closed $(1,1)$-form on $X_t$. Thus condition
\eqref{cond-H11-dense} of Theorem \ref{thm-H11-lift} is satisfied.
Consequently, Theorem \ref{thm-H11-lift} gives a global smooth real $d$-closed $(1,1)$-form
$\widetilde{\alpha}$ on $X$ such that
\[
[\widetilde{\alpha}|_{X_t}]=\sigma_t
\qquad\text{for each }t\in S.
\]
Thus condition \eqref{cond-tf-global} holds. This completes the proof.
\end{proof}

\section{K\"ahlerness  of the total space near a K\"ahler fiber}

\subsection{Existence of a K\"ahler neighborhood of a compact K\"ahler submanifold}

We first establish the following  auxiliary lemma, which provides a smooth distance-square type function whose Levi form is semipositive along the submanifold and strictly positive in the normal directions.

\begin{lemma}\label{lem-rho}
Let $X$ be a complex manifold and let $Z\subset X$ be a (closed) complex
submanifold. Then there exists a smooth real-valued function
$\rho\in C^{\infty}(X,\mathbb R)$ such that:
\begin{enumerate}[\rm{(}1\rm{)}]
\item $\rho\geq 0$ on $X$;
\item $\rho^{-1}(0)=Z$;
\item $d\rho_{z}=0 \in T_{X, z}^*$  for any $z\in Z$;
\item for each $x\in Z$, the Levi form
$\bigl(\sqrt{-1}\partial\bar\partial\rho\bigr)_{x}$ (acting on $T_{X,x}^{1,0}$) is semipositive;
\item for each $x\in Z$, the  form induced by
$\bigl(\sqrt{-1}\partial\bar\partial\rho\bigr)_{x}$ on
$N_{Z/X,x}=T_{X,x}^{1,0}/T_{Z,x}^{1,0}$ is strictly positive definite;
\item 
$\sqrt{-1}\partial\bar\partial\rho\bigr|_{Z}=0$.
\end{enumerate}
\end{lemma}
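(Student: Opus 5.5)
The plan is to build $\rho$ by gluing the local squared distances to $Z$ in adapted holomorphic charts with a partition of unity. An extra term away from $Z$ will force $\rho>0$ off $Z$. Everything is then checked pointwise at $Z$, where all cross terms created by the cutoff functions vanish.

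\emph{Local functions.} Since $Z$ is a closed complex submanifold, I would cover $Z$ by a locally finite family of coordinate charts $\{U_i\}_{i\geq 1}$ of $X$ with holomorphic coordinates $(z',z'')=(z_1,\dots,z_k,z_{k+1},\dots,z_n)$ such that $Z\cap U_i=\{z''=0\}$. Here $k$ may depend on the component of $Z$. On each chart set $\rho_i:=\sum_{j>k}|z_j|^2$. Then $\rho_i\geq 0$ and $\rho_i^{-1}(0)=Z\cap U_i$. Also $d\rho_i=0$ along $Z\cap U_i$, and $\sqrt{-1}\partial\bar\partial\rho_i=\sqrt{-1}\sum_{j>k}dz_j\wedge d\bar z_j$. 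This form is semipositive, its kernel is exactly $T^{1,0}_Z=\{dz''=0\}$, and it is positive definite on the normal quotient.

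\emph{Gluing.} Put $U_0:=X\setminus Z$, which is open because $Z$ is closed, and $\rho_0:\equiv 1$ on $U_0$. Take a smooth partition of unity $\{\chi_i\}_{i\geq0}$ subordinate to the open cover $\{U_0\}\cup\{U_i\}_{i\geq1}$ of $X$, and define
\[
\rho:=\sum_{i\geq 0}\chi_i\rho_i .
\]
Every term is $\geq0$, so (1) holds. For (2), let $x\notin Z$. Some $\chi_i(x)>0$, and then $x\in U_i\setminus Z$, so $\rho_i(x)>0$; hence $\rho(x)>0$. Conversely $\rho=0$ on $Z$, because $\rho_i=0$ there for $i\geq1$ and $\chi_0=0$ on $Z$.

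Moreover, $\operatorname{supp}\chi_0$ is a closed subset of $X$ contained in $X\setminus Z$. Hence $\chi_0$ vanishes on a neighborhood of $Z$ and does not affect any derivative of $\rho$ at points of $Z$.

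\emph{Checks at points of $Z$.} Fix $x\in Z$. For (3), $d\rho_x=\sum_{i\ge1}\bigl(\rho_i(x)\,d\chi_i+\chi_i(x)\,d\rho_i\bigr)=0$. For the Levi form, expand
\[
\sqrt{-1}\partial\bar\partial(\chi_i\rho_i)=\chi_i\sqrt{-1}\partial\bar\partial\rho_i+\rho_i\sqrt{-1}\partial\bar\partial\chi_i+\sqrt{-1}\bigl(\partial\chi_i\wedge\bar\partial\rho_i+\partial\rho_i\wedge\bar\partial\chi_i\bigr).
\]
At $x$ the last three terms vanish, because $\rho_i(x)=0$ and $d\rho_i(x)=0$. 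Therefore
\[
\bigl(\sqrt{-1}\partial\bar\partial\rho\bigr)_x=\sum_{i\ge1}\chi_i(x)\bigl(\sqrt{-1}\partial\bar\partial\rho_i\bigr)_x ,
\]
which is a convex combination of the local model forms.

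Each summand is semipositive, which gives (4). Each summand is positive definite on $N_{Z/X,x}$, and at least one $\chi_i(x)>0$. The induced form on the normal space is therefore a convex combination of positive definite forms, which gives (5); it is well defined because every summand vanishes on $T^{1,0}_{Z,x}$.

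For (6), the pullback of $\sqrt{-1}\sum_{j>k}dz_j\wedge d\bar z_j$ to $Z\cap U_i$ is zero, since $z''\equiv0$ on $Z$. Combined with the displayed formula, this shows $\sqrt{-1}\partial\bar\partial\rho|_Z=0$ as a form on $Z$.

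The main obstacle is the bookkeeping in the last step: one must confirm that the cutoff cross terms vanish exactly on $Z$, not merely to leading order. This rests on the second-order vanishing of every $\rho_i$ along $Z$ and on $\chi_0$ vanishing near $Z$. Both are built into the construction above.
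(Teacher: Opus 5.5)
Your proof is correct, but it uses a different construction from the paper's.

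The paper starts from the Riemannian squared distance $h=\operatorname{dist}_g(\cdot,Z)^2$ of a Hermitian metric. It cites \cite{M92} for the smoothness of $h$ on a tubular neighbourhood. It cites \cite[Proposition 2.1]{NWZ24} for holomorphic coordinates at each $p\in Z$ in which the Levi matrix of $h$ at $p$ is $\operatorname{diag}(0,I_r)$. It then sets $\rho=\eta h+(1-\eta)$ with a cutoff $\eta\equiv 1$ near $Z$, so $\rho=h$ near $Z$ and no gluing terms ever appear.

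You instead patch the flat models $\sum_{j>k}|z_j|^2$ from slice charts with a partition of unity. Your key observation is that each $\rho_i$ vanishes to second order on $Z$. Hence every cutoff cross term drops out at points of $Z$, and $(\sqrt{-1}\partial\bar\partial\rho)_x$ is a convex combination of semipositive forms whose kernel is exactly $T^{1,0}_{Z,x}$. That gives (3)--(6) directly, and the role of $\chi_0$ vanishing near $Z$ is handled correctly.

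Your route is self-contained: it needs only slice charts and partitions of unity. It avoids both the smoothness of the distance function near a possibly noncompact $Z$ and the external Levi-form computation. The paper's route gives a canonical, metric-defined $\rho$ whose Levi form along $Z$ is normalized. Theorem~\ref{thm-pair-kahler} uses only properties (4)--(6) pointwise on $Z$, together with compactness of $Z$, so either construction works there.
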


\begin{proof}
Choose a Hermitian metric $\omega$ on $X$, and let $g$ be the associated
Riemannian metric. Then there exists an open tubular neighborhood
$U$ of $Z$ such that the squared distance function
\[
h:=\operatorname{dist}_{g}(\,\cdot\,,Z)^{2}
\]
is smooth on $U$ (\cite{M92}). Moreover,
\[
h\geq 0,\qquad h^{-1}(0)=Z,\qquad dh_{z}=0  \text{ for any } z\in Z.
\]

For each $p\in Z$, apply   \cite[Proposition 2.1]{NWZ24}  to the complex submanifold
$Z\subset X$ with the Hermitian metric $\omega$. Let
$k=\dim_{\mathbb C,p}Z$ and $r=\operatorname{codim}_{\mathbb C,p}(Z,X)$.
Then there are holomorphic coordinates
\[
(z^{1},\ldots,z^{k},w^{1},\ldots,w^{r})
\]
centered at $p$ such that $Z$ is locally given by
$w^{1}=\cdots=w^{r}=0$, and the Levi matrix of $h$ at $p$ has the  form
\[
\left(
\frac{\partial^{2}h}
{\partial \xi^{\alpha}\partial\overline{\xi^{\beta}}}(p)
\right)
=
\begin{pmatrix}
0 & 0\\
0 & I_{r}
\end{pmatrix},
\qquad \xi=(z,w).
\]
Consequently, $\bigl(\sqrt{-1}\partial\bar\partial h\bigr)_{p}$ is
semipositive on $T_{X,p}^{1,0}$. Moreover, its kernel  is $T_{Z,p}^{1,0}$,
and the
Hermitian form induced by $\bigl(\sqrt{-1}\partial\bar\partial h\bigr)_{p}$
on
\[
N_{Z/X,p}=T_{X,p}^{1,0}/T_{Z,p}^{1,0}
\]
is positive definite.

It remains to extend $h$ to a global smooth function without changing it
near $Z$. Since $Z$ is closed and $U$ is an open neighborhood of $Z$, choose
a smooth function $\eta\in C^{\infty}(X,[0,1])$ such that $\eta\equiv 1$ on
an open neighborhood of $Z$ and $\operatorname{supp}(\eta)\subset U$. Since
$\operatorname{supp}(\eta)\subset U$, the function $\eta h$ on $U$ extends
by zero to a smooth function on $X$. We still denote this extension by
$\eta h$, and define
\[
\rho:=\eta h+(1-\eta).
\]

Then $\rho\geq 0$ on $X$. If $x\notin U$, then $\eta(x)=0$, and so
$\rho(x)=1$. If $x\in U$ and $\eta(x)<1$, then
\[
\rho(x)=\eta(x)h(x)+1-\eta(x)>0.
\]
If $x\in U$ and $\eta(x)=1$, then $\rho(x)=h(x)$, which vanishes precisely
when $x\in Z$. Thus $\rho^{-1}(0)=Z$.

Finally, since $\eta\equiv 1$ on a neighborhood of $Z$, one has $\rho=h$ on
a neighborhood of $Z$. Therefore
\[
d \rho_x=d h_x=0 \in T_{X, x}^* \quad \text { for each } x \in Z, 
\]
and for each $p\in Z$,
\[
\bigl(\sqrt{-1}\partial\bar\partial\rho\bigr)_{p}
=
\bigl(\sqrt{-1}\partial\bar\partial h\bigr)_{p}.
\]
The semipositivity on $T_{X,p}$ and the positive definiteness on
$N_{Z/X,p}$ therefore follow  from the corresponding properties of $h$.

Moreover, since $\rho|_{Z}=0$,  we have
\[
\sqrt{-1}\partial\bar\partial\rho\bigr|_Z
=
\sqrt{-1}\partial\bar\partial(i^{*}\rho)
=
0,
\]
where $i$ is  the inclusion $i:Z\hookrightarrow X$.
This proves the lemma.
\end{proof}

We now prove the following  criterion for the existence of a K\"ahler neighborhood of a compact K\"ahler  submanifold.

\begin{theorem}\label{thm-pair-kahler}
Let $X$ be a complex manifold, and let $Z\subset X$ be a compact K\"ahler
submanifold. Let $U$ be an open neighborhood of $Z$ in $X$, and let
$\alpha$ be a smooth real $d$-closed $(1,1)$-form on $U$. Assume that there exists a K\"ahler form $\omega_Z$ on $Z$ such that \[
[\omega_Z]=[i^*\alpha]\quad\text{in }H_{\rm dR}^2(Z,\mathbb R),
\] where $i:Z\hookrightarrow U$ is the natural inclusion.
Then there exist an open neighborhood $W\subset U$ of $Z$ and a smooth
real-valued function $\Psi\in C^\infty(W,\mathbb R)$
such that
\[
\widehat{\alpha}:=\alpha|_W+\sqrt{-1}\partial\bar\partial\Psi
\]
is a K\"ahler form on $W$ and
\[
i_W^*\widehat{\alpha}=\omega_Z,
\]
where $i_W:Z\hookrightarrow W$ is the inclusion. 
\end{theorem}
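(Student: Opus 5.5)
The plan is to first fix the restriction to $Z$, and then add a large multiple of the function $\rho$ from Lemma \ref{lem-rho} to obtain positivity in the normal directions.

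\textbf{Step 1: matching the restriction to $Z$.} Since $i^*\alpha$ and $\omega_Z$ are both real $d$-closed $(1,1)$-forms on the compact K\"ahler manifold $Z$ with the same de Rham class, the $\partial\bar\partial$-lemma on $Z$ gives a smooth real function $f$ on $Z$ with
\[
\omega_Z=i^*\alpha+\sqrt{-1}\partial\bar\partial f.
\]
Next I extend $f$ to a smooth function $F$ on a neighborhood of $Z$. One way is to use a tubular neighborhood retraction $p$ and set $F:=f\circ p$, multiplied by a cutoff equal to $1$ near $Z$. Then $\alpha_1:=\alpha+\sqrt{-1}\partial\bar\partial F$ is real, closed and of type $(1,1)$, and $i^*\alpha_1=\omega_Z$, because pullback commutes with $\partial\bar\partial$ for holomorphic maps.

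\textbf{Step 2: adding the normal correction.} Set $\Psi:=F+C\rho$ with $C>0$ large, where $\rho$ is given by Lemma \ref{lem-rho} applied to $Z\subset U$. Property (6) of that lemma gives $i^*\sqrt{-1}\partial\bar\partial\rho=0$, so $i_W^*\widehat\alpha=\omega_Z$ for any choice of $C$ and $W$.

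\textbf{Step 3: pointwise positivity along $Z$ (the main step).} Fix $x\in Z$ and split $T^{1,0}_{X,x}=T^{1,0}_{Z,x}\oplus N$, where $N$ is some complement. Write $v=a+b$ with $a\in T_Z$ and $b\in N$. Let $H=\alpha_1$ at $x$ and $L=\sqrt{-1}\partial\bar\partial\rho$ at $x$. Lemma \ref{lem-rho} says $L\ge0$ with kernel containing $T_Z$; since $L$ is semipositive, this forces $L(a,\cdot)=0$, so $L(v,v)=L(b,b)\ge c|b|^2$. Moreover $H(a,a)=\omega_Z(a,a)\ge c|a|^2$. The cross term and the normal term of $H$ are bounded by $M(|a||b|+|b|^2)$. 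Hence
\[
(H+CL)(v,v)\ge c|a|^2-2M|a||b|+(Cc-M)|b|^2,
\]
which is positive definite once $C$ is large. Compactness of $Z$ makes the constants $c$ and $M$ uniform in $x$, so a single $C$ works for all $x\in Z$.

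\textbf{Step 4: choosing $W$.} Positivity of a continuous Hermitian form is an open condition, and $Z$ is compact. Therefore $\widehat\alpha$ is positive definite on some open neighborhood $W\subset U$ of $Z$, which I may take inside the region where $F$ and $\rho$ are defined. Since $\widehat\alpha$ is closed, it is a K\"ahler form on $W$.
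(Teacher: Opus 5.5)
Your proposal is correct and follows essentially the same route as the paper: fix the restriction with the $\partial\bar\partial$-lemma on $Z$ and an extension, add $C\rho$ from Lemma \ref{lem-rho}, get positivity along $Z$ by the Cauchy--Schwarz splitting (your explicit estimate in $(|a|,|b|)$ is a quantitative form of the paper's block-matrix claim), and shrink to $W$ using that positivity is an open condition. For the uniformity of $c$ and $M$, say explicitly that the complement $N$ varies continuously in $x$, e.g.\ as the orthogonal complement of $T^{1,0}_{Z}$ for a fixed Hermitian metric, which is what the paper does.
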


\begin{proof}
 Since
\[
[\omega_Z]=[i^*\alpha]\quad\text{in }H_{\rm dR}^2(Z,\mathbb R),
\]
the form $\omega_Z-i^*\alpha$ is  real $d$-exact on $Z$.
By  $\partial\bar\partial$-lemma on the compact K\"ahler manifold $Z$, there exists a smooth real-valued
function $\varphi\in C^\infty(Z,\mathbb R)$
such that
\[
\omega_Z-i^*\alpha
=
\sqrt{-1}\partial_Z\bar\partial_Z\varphi.
\]
Extend $\varphi$ to a smooth
real-valued function $\Phi\in C^\infty(U,\mathbb R).$
Set
\[
\alpha_1:=\alpha+\sqrt{-1}\partial\bar\partial\Phi.
\]
Then $\alpha_1$ is a smooth real $d$-closed $(1,1)$-form on $U$ 
 such that 
 \[i^*\alpha_1=\omega_Z.\]

By Lemma \ref{lem-rho}, applied to the pair $(U,Z)$, there exists a smooth
real-valued function $\rho\in C^\infty(U,\mathbb R)$ such that, for
\[
\lambda:=\sqrt{-1}\partial\bar\partial\rho,
\]
the  form $\lambda_z$ is semipositive on $T_{U,z}^{1,0}$ for
each $z\in Z$, the induced  form on
\[
N_{Z/U,z}:=\left(T_U^{1,0}|_Z/T_Z^{1,0}\right)_z
\]
is strictly positive definite for each $z\in Z$, and $i^*\lambda=0.$

Utilizing the fact that $i^*\alpha_1=\omega_Z$ is a K\"ahler form on $Z$ and the properties of $\rho$, we obtain the following claim by elementary linear algebra theory.

\begin{claim}
   There exists $C>0$ such that
\[
\alpha_1+C\lambda
\]
is strictly  positive definite on $T_{U,z}^{1,0}$ for each $z\in Z$.
\end{claim}

\begin{proof}[Proof of the claim]
 For the reader's convenience, we write the  details of this claim.
 
Fix any smooth Hermitian metric $h$ on
$U$.  Then $N_{Z/U}:=T_U^{1,0}|_Z/T_Z^{1,0}$, endowed with   the quotient metric, 
can be viewed as an $h$-orthogonal complement of $T_Z^{1,0}$ in
$T_U^{1,0}|_Z$, as $\mathscr{C}^{\infty}$ complex vector bundles over $Z$.

Since $i^*\lambda=0$, one has
$\lambda_z(\tau,\overline{\tau})=0$
for each $z\in Z$ and each $\tau\in T_{Z,z}^{1,0}$. Since $\lambda_z$ is
semipositive, the Cauchy--Schwarz inequality for semipositive Hermitian
forms gives
\[
\left|\lambda_z(\tau,\overline{\xi})\right|^2
\leq
\lambda_z(\tau,\overline{\tau})\lambda_z(\xi,\overline{\xi})
=0
\]
for each $\xi\in T_{U,z}^{1,0}$. Thus
\[
\lambda_z(\tau,\overline{\xi})=0
\]
for each $z\in Z$, each $\tau\in T_{Z,z}^{1,0}$, and each
$\xi\in T_{U,z}^{1,0}$.

Therefore, with respect to the smooth splitting
\[
T_{U,z}^{1,0}=T_{Z,z}^{1,0}\oplus N_{Z/U,z},
\]
the Hermitian matrix of $\alpha_1+C\lambda$ has the  form
\[
\begin{pmatrix}
A_z & B_z\\
B_z^* & D_z+C Q_z
\end{pmatrix}.
\]
Here all the matrix entries depend smoothly on $z$,  $A_z$ is the strictly positive definite Hermitian matrix of $\omega_{Z,z}$  on
$T_{Z,z}^{1,0}$ (since $i^*\alpha_1=\omega_Z$),  and $Q_z$ is the Hermitian matrix of
$\lambda_z|_{N_{Z/U,z}}$.

Since $A_z>0$, $Q_z>0$ and $Z$ is compact, we obtain  that  there exists a constant $C>0$ such that 
$(\alpha_1+C\lambda)_z$
is strictly positive definite on $T_{U,z}^{1,0}$ for each $z\in Z$. This proves the claim.
\end{proof}

Set \[\Psi:=(\Phi+C \rho)\]  and 
\[
\alpha_C:=\alpha_1+C\lambda.
\]
Since strict positive-definiteness is an open condition, after shrinking $U$ to an open
neighborhood $W$ of $Z$, the form
\[
\widehat{\alpha}:=\alpha_C|_W
\]
is strictly positive definite on $W$.  Clearly,  $\widehat{\alpha}$ is the desired K\"ahler form on $W$.
This completes the proof.
\end{proof}

\subsection{Equivalent characterization of the K\"ahlerness of the total space near a K\"ahler fiber}

As a  consequence of Corollary \ref{cor-H11-tf} and Theorem
\ref{thm-pair-kahler}, we obtain an  equivalent characterization of the K\"ahlerness of the total space near a K\"ahler fiber.

We first establish the following auxiliary observation, which is utilized to deal with the case where
$U$ is  far away from  $0$ in 
condition \eqref{cond-lk-section} of Corollary \ref{cor-local-kahler}.

\begin{lemma}\label{lem-coeff-fiber}
Let $\pi:X\to S$ be a proper holomorphic submersion from a complex
manifold onto a polydisc $S\subset \mathbb C^m$ centered at $0$. For each $t\in S$, set
$X_t:=\pi^{-1}(t), 
k(t):=\mathcal O_{S,t}/\mathfrak m_t, L:=R^2\pi_*\mathbb R_X$  and $
E:=R^2\pi_*\mathcal O_X.$

Let $\Phi:L\to E$
be the morphism  induced by the natural 
morphism $\theta: \mathbb R_X\to\mathcal O_X$. Let $\sigma\in H^0(S,L)$ and set
\[
\eta:=H^0(S,\Phi)(\sigma)\in H^0(S,E),
\]
where $H^0(S,\Phi)$ denotes the induced morphism on global sections associated with the sheaf morphism $\Phi$.
For each $t\in S$, let
\[
\sigma_t\in H^2(X_t,\mathbb R)
\]
be the class obtained from the value of $\sigma$ at $t$ under the canonical
identification
\[
(R^2\pi_*\mathbb R_X)_t\cong H^2(X_t,\mathbb R).
\]
Let
\[
q_t:H^2(X_t,\mathbb R)
\to
H^2(X_t,\mathcal O_{X_t})
\]
be the  map induced by
$\mathbb R_{X_t}\to\mathcal O_{X_t}$. Let
\[
\operatorname{ev}_t:H^0(S,E)
\to
E_t\otimes_{\mathcal O_{S,t}}k(t)
\]
be evaluation at $t$, and let
\[
\operatorname{bc}_t:
E_t\otimes_{\mathcal O_{S,t}}k(t)
\to
H^2(X_t,\mathcal O_{X_t})
\]
be the base-change morphism. Then
\[
\operatorname{bc}_t\bigl(\operatorname{ev}_t(\eta)\bigr)
=
q_t(\sigma_t)
\]
for each $t\in S$. In particular, if $\eta=0$, then
\[
q_t(\sigma_t)=0
\quad\text{in}\quad
H^2(X_t,\mathcal O_{X_t})
\]
for each $t\in S$.
\end{lemma}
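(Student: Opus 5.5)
The identity is a naturality statement, and I would prove it by comparing the two composites $H^0(S,L)\to H^2(X_t,\mathcal O_{X_t})$ at the level of the sheaf morphism $\theta:\mathbb R_X\to\mathcal O_X$. The point is that both the fiber identification $(R^2\pi_*\mathbb R_X)_t\cong H^2(X_t,\mathbb R)$ and the base-change map $\operatorname{bc}_t$ are built from restriction to fibers, and restriction to fibers commutes with $\theta$.

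\textbf{Step 1: local description of both maps.} For a sheaf $\mathcal F$ on $X$, the germ at $t$ of a section of $R^2\pi_*\mathcal F$ is represented by a class in $H^2(\pi^{-1}(D),\mathcal F)$ for a small polydisc $D\ni t$. Because $\pi$ is proper, $(R^2\pi_*\mathcal F)_t=\varinjlim_D H^2(\pi^{-1}(D),\mathcal F)$.

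For $\mathcal F=\mathbb R_X$, the canonical identification is the restriction $\varinjlim_D H^2(\pi^{-1}(D),\mathbb R)\to H^2(X_t,\mathbb R)$. This is an isomorphism by proper base change (or Ehresmann).

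For $\mathcal F=\mathcal O_X$, the base-change morphism is induced by the restriction $H^2(\pi^{-1}(D),\mathcal O_X)\to H^2(X_t,\mathcal O_{X_t})$, that is, by pullback along $\mathcal O_X\to i_{t*}\mathcal O_{X_t}$. This restriction kills $\mathfrak m_t$ and therefore factors through $E_t\otimes k(t)$. I would take this as the definition of $\operatorname{bc}_t$, consistent with Appendix A and Lemma \ref{lem-bc-edge}.

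\textbf{Step 2: the commutative square.} The following square of sheaves on $X$ commutes, where the vertical arrows are restriction to the fiber composed with $i_{t*}$:
\[
\begin{array}{ccc}
\mathbb R_X & \xrightarrow{\theta} & \mathcal O_X\\
\downarrow & & \downarrow\\
i_{t*}\mathbb R_{X_t} & \xrightarrow{i_{t*}\theta_t} & i_{t*}\mathcal O_{X_t}
\end{array}
\]
Applying $H^2(\pi^{-1}(D),-)$ to this square, and using $H^2(\pi^{-1}(D),i_{t*}\mathcal G)=H^2(X_t,\mathcal G)$, gives a commutative square
\[
H^2(\pi^{-1}(D),\mathbb R)\to H^2(\pi^{-1}(D),\mathcal O_X)\to H^2(X_t,\mathcal O_{X_t})
\;=\;
H^2(\pi^{-1}(D),\mathbb R)\to H^2(X_t,\mathbb R)\xrightarrow{q_t} H^2(X_t,\mathcal O_{X_t}).
\]
The equality $H^2(\pi^{-1}(D),i_{t*}\mathcal G)=H^2(X_t,\mathcal G)$ holds because $i_t$ is a closed embedding, so $i_{t*}$ is exact.

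Passing to the colimit over $D$ shows two things:
\[
\Phi_t:(R^2\pi_*\mathbb R)_t\to E_t \text{ is induced by } \theta \text{ on } H^2(\pi^{-1}(D),-),
\]
and
\[
\operatorname{bc}_t\circ(\text{reduction mod }\mathfrak m_t)\circ\Phi_t=q_t\circ(\text{canonical identification}).
\]

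\textbf{Step 3: conclusion.} Evaluate the identity from Step 2 on the germ of $\sigma$ at $t$. The germ of $\eta=H^0(S,\Phi)(\sigma)$ at $t$ is $\Phi_t(\sigma_t)$, and $\operatorname{ev}_t(\eta)$ is its image in $E_t\otimes k(t)$. This gives $\operatorname{bc}_t(\operatorname{ev}_t(\eta))=q_t(\sigma_t)$. The final assertion follows immediately: if $\eta=0$, the left side vanishes for every $t$.

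The main obstacle is bookkeeping rather than mathematics. One has to make sure that the base-change morphism used in the paper coincides with the colimit of restriction maps described in Step 1, and that the higher direct image functoriality $R^2\pi_*\theta=\Phi$ is compatible with the stalk description. I would handle this by computing $R^2\pi_*$ through a single choice of flasque (Godement) resolutions with a morphism of resolutions lifting $\theta$, and then restricting those resolutions to $X_t$.
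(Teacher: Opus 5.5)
Your proof is correct, but it takes a different route from the paper's.

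\textbf{The paper's route (global).} Since $S$ is a polydisc, $H^p(S,R^q\pi_*\mathbb R)=0$ for $p>0$. So the Leray edge map $\rho_{\mathbb R}:H^2(X,\mathbb R)\to H^0(S,L)$ is an isomorphism, and $\sigma=\rho_{\mathbb R}(c)$ for a global class $c$ with $c|_{X_t}=\sigma_t$ by Lemma \ref{lem-H2-lift}. Functoriality of the Leray spectral sequence in $\theta$ gives $\rho_{\mathcal O}(q_X(c))=\eta$. Lemma \ref{lem-bc-edge}, applied to the global class $q_X(c)$, then yields $\operatorname{bc}_t(\operatorname{ev}_t(\eta))=q_X(c)|_{X_t}=q_t(\sigma_t)$.

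\textbf{Your route (stalkwise).} You never lift $\sigma$ globally. You represent its germ at $t$ by a class on $\pi^{-1}(D)$, observe that $\Phi_t$ and $\operatorname{bc}_t$ are both colimits of maps on $H^2(\pi^{-1}(D),-)$, and use naturality of restriction to $X_t$ with respect to $\theta$. In effect you prove a local version of Lemma \ref{lem-bc-edge} over $D$ and combine it with naturality in $\theta$.

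\textbf{What each buys.} Your argument uses neither contractibility of $S$ nor the edge isomorphism of Lemma \ref{lem-leray-edge}. It therefore shows the lemma holds over an arbitrary base, using only the presheaf descriptions of $R^2\pi_*$, of $R^2\pi_*\theta$, and of $\operatorname{bc}_t$. The paper's proof instead reuses appendix machinery (Lemmas \ref{lem-leray-edge}, \ref{lem-H2-lift}, \ref{lem-bc-edge}) already needed for Theorem \ref{thm-H11-lift}. The cost is that it leans on the polydisc hypothesis, which is inessential for this statement.

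\textbf{A small correction.} The identity $(R^2\pi_*\mathcal F)_t=\varinjlim_D H^2(\pi^{-1}(D),\mathcal F)$ holds for any continuous map, since it is just the stalk of a sheafification. Properness is what you need in the next step, to identify this colimit with $H^2(X_t,\mathbb R)$ by proper base change or Ehresmann.
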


\begin{proof}
As in Step \ref{step-lift-1} of the proof of Theorem \ref{thm-H11-lift},  the
 edge homomorphism
\[
\rho_{\mathbb R}:H^2(X,\mathbb R)
\xrightarrow{\ \cong\ }
H^0(S,L)
\]
of  the Leray  spectral sequence is an isomorphism. Therefore there exists a unique class
\[
c\in H^2(X,\mathbb R)
\]
such that $\rho_{\mathbb R}(c)=\sigma.$
Moreover, Lemma \ref{lem-H2-lift}  gives
\[
c|_{X_t}=\sigma_t
\quad\text{in}\quad
H^2(X_t,\mathbb R)
\]
for each $t\in S$.

Let
\[
q_X:=H^2(X,\theta):
H^2(X,\mathbb R_X)\to H^2(X,\mathcal O_X)
\]
be induced by $\theta$.
Apply the functoriality (e.g.,  \cite[Chapter I, \S 1, p. 13]{BS76} or  \cite[Tag
01F6]{Stacks}) of the Leray spectral sequence  to $\theta$, the following diagram commutes:
\[
\begin{array}{ccc}
H^2(X,\mathbb R_X)
&
\xrightarrow{\ \rho_{\mathbb R}\ }
&
H^0(S,R^2\pi_*\mathbb R_X)
\\[1.4em]
\Big\downarrow\vcenter{\rlap{$\scriptstyle q_X$}}
&
&
\Big\downarrow\vcenter{\rlap{$\scriptstyle H^0(S,\Phi)$}}
\\[1.4em]
H^2(X,\mathcal O_X)
&
\xrightarrow{\ \rho_{\mathcal O}\ }
&
H^0(S,R^2\pi_*\mathcal O_X)
\end{array}
\]
where $\rho_{\mathcal O}$ denotes the  edge homomorphism of the Leray spectral sequence for $\mathcal O_X$.
Consequently, one has
\[
\rho_{\mathcal O}\bigl(q_X(c)\bigr)
=
H^0(S,\Phi)(\sigma)
=
\eta.
\]

Applying Lemma \ref{lem-bc-edge} gives that
\[
\operatorname{bc}_t
\bigl(
\operatorname{ev}_t(\rho_{\mathcal O}(q_X(c)))
\bigr)
=
q_X(c)|_{X_t}.
\]
Consequently, 
\[
\operatorname{bc}_t\bigl(\operatorname{ev}_t(\eta)\bigr)
=
\bigl(q_X(c)\bigr)|_{X_t}=q_t(c|_{X_t})=q_t(\sigma_t).
\]
This completes the proof.
\end{proof}

Now we establish the K\"ahlerness of the total space near a K\"ahler fiber.

\begin{corollary}\label{cor-local-kahler}
Let $\pi:X\to S$ be a proper holomorphic submersion from a complex
manifold $X$ onto a polydisc $S\subset\mathbb C^m$ centered at
$0$. Set $X_t:=\pi^{-1}(t)$ for each $t\in S$. Assume that
$X_0$ is K\"ahler.
Then the following conditions are equivalent:
\begin{enumerate}[\rm{(}1\rm{)}]
\item\label{cond-lk-total}
There exists a polydisc $S_1\Subset S$ centered at $0$ such that
\[
X_{S_1}:=\pi^{-1}(S_1)
\]
is K\"ahler.

\item\label{cond-lk-section}
There exist a polydisc $S_2\Subset S$ centered at $0$ and a section
\[
\sigma\in H^0(S_2,(R^2\pi_*\mathbb R)|_{S_2})
\]
such that, under the canonical identification
\[
(R^2\pi_*\mathbb R)_t\cong H^2(X_t,\mathbb R),
\]
one has
$\sigma_0=[\omega_0]$
for some K\"ahler form $\omega_0$ on $X_0$, and there exists a subset
$U\subset S_2$ (note that $U$ may even be disjoint from a small open neighborhood of $0$), dense in the analytic Zariski topology of $S_2$, such that, for
each $t\in U$, the class $\sigma_t\in H^2(X_t,\mathbb R)$
is represented by a smooth real $d$-closed $(1,1)$-form on $X_t$.
\end{enumerate}
\end{corollary}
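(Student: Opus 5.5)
The implication \eqref{cond-lk-total}$\Rightarrow$\eqref{cond-lk-section} is the easy direction. Let $\omega$ be a K\"ahler form on $X_{S_1}$ and choose a polydisc $S_2\Subset S_1$ centered at $0$. The de Rham class $[\omega|_{X_{S_2}}]$ gives, through the edge homomorphism $\lambda_2$ (as in Step \ref{step-lift-1} of the proof of Theorem \ref{thm-H11-lift}, together with Lemma \ref{lem-H2-lift}), a section $\sigma\in H^0(S_2,R^2\pi_*\mathbb R)$ with $\sigma_t=[\omega|_{X_t}]$ for every $t\in S_2$. Each $\omega|_{X_t}$ is a real closed $(1,1)$-form, and $\omega_0:=\omega|_{X_0}$ is K\"ahler. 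So we may take $U=S_2$.

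For \eqref{cond-lk-section}$\Rightarrow$\eqref{cond-lk-total}, I cannot quote Corollary \ref{cor-H11-tf}, because $E=R^2\pi_*\mathcal O_X$ need not be torsion-free on $S_2$. Moreover $U$ may avoid a neighbourhood of $0$. The plan is to use Lemma \ref{lem-coeff-fiber} together with Kodaira--Spencer stability. Work over $S_2$ and set $\eta:=H^0(S_2,\Phi)(\sigma)\in H^0(S_2,E)$. Let $V\subset S_2$ be the dense Zariski open set from Theorem \ref{thm-H11-lift}. For each $t\in U\cap V$ (dense, by the argument in the proof of Corollary \ref{cor-H11-tf}), the class $\sigma_t$ has a $(1,1)$ representative, so $q_t(\sigma_t)=0$. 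By Lemma \ref{lem-coeff-fiber} we get $\operatorname{bc}_t(\operatorname{ev}_t(\eta))=0$, and since $\operatorname{bc}_t$ is an isomorphism on $V$, $\operatorname{ev}_t(\eta)=0$. Lemma \ref{lem-fiber-torsion} then gives $\eta\in H^0(S_2,\operatorname{Tor}(E))$.

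Next I localize near $0$. By the Kodaira--Spencer stability theorem, the fibers $X_t$ are K\"ahler for $t$ in a small polydisc $S'\Subset S_2$ centered at $0$. For K\"ahler fibers, $\sum_{p+q=k}h^{p,q}(X_t)=b_k$ is constant and each $h^{p,q}$ is upper semicontinuous, so $h^{0,2}(X_t)$ is constant on $S'$. Grauert's base change theorem then makes $E|_{S'}$ locally free, hence torsion-free, and so $\eta|_{S'}=0$.

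Now take the closed real representative $\alpha_0$ of the lift $c$ of $\sigma$, as in Step \ref{step-lift-1} of the proof of Theorem \ref{thm-H11-lift}. By the commutative diagram in the proof of Lemma \ref{lem-coeff-fiber} and compatibility of edge maps with restriction to $S'$, the obstruction $\rho([\alpha_0^{0,2}|_{X_{S'}}])$ equals $\eta|_{S'}=0$. Since $\rho$ is an isomorphism on the Stein polydisc $S'$, the class $[\alpha_0^{0,2}]$ vanishes in $H^2(X_{S'},\mathcal O)$. Step 4 of the proof of Theorem \ref{thm-H11-lift} then yields a real $d$-closed $(1,1)$-form $\widetilde\alpha$ on $X_{S'}$ with $[\widetilde\alpha|_{X_0}]=\sigma_0=[\omega_0]$.

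Apply Theorem \ref{thm-pair-kahler} with $Z=X_0$, ambient $U=X_{S'}$ and $\alpha=\widetilde\alpha$. This gives a K\"ahler form on an open neighbourhood $W$ of $X_0$. Since $\pi$ is proper, $W\supset\pi^{-1}(S_1)$ for some small polydisc $S_1$ centered at $0$, and restricting the K\"ahler form proves \eqref{cond-lk-total}.

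I expect the main obstacle to be the localization step. One has to check that the vanishing of $\eta$ on $S'$ really kills the Dolbeault obstruction for $X_{S'}$, which requires compatibility of the Leray edge maps with restriction to the subpolydisc. One also has to justify local freeness of $E$ near $0$ from the K\"ahlerness of the nearby fibers.
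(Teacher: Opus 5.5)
Your proof is correct. Up to the point where $\eta\in H^0(S_2,\operatorname{Tor}(E_2))$ and $E$ becomes locally free near $0$, it is the paper's argument. After that the two routes differ.

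\emph{The paper's route.} It gets $\eta|_{S_3}=0$ from $\eta|_V=0$ and the identity principle for sections of a locally free sheaf on the connected $S_3$. It then applies Lemma \ref{lem-coeff-fiber} again to get $q_t(\sigma_t)=0$ for every $t\in S_3$. The Hodge decomposition on each K\"ahler fiber then shows that every $\sigma_t$ is of type $(1,1)$. Finally it invokes Corollary \ref{cor-H11-tf} on the restricted family with $U=S_3$, which re-runs the obstruction argument of Theorem \ref{thm-H11-lift} there.

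\emph{Your route.} You get $\eta|_{S'}=0$ directly, since $\operatorname{Tor}(E)|_{S'}=\operatorname{Tor}(E|_{S'})=0$. You then notice that the obstruction section $\nu=\rho([\alpha_0^{0,2}])$ of Theorem \ref{thm-H11-lift} is exactly $\eta$. So its vanishing over $S'$ feeds straight into Step 4. This skips the fiberwise Hodge decomposition and the second pass through Corollary \ref{cor-H11-tf}. K\"ahlerness of the nearby fibers is used only to make $E$ locally free near $0$.

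\emph{What you should state explicitly.} Your route relies on the identity
\[
q_X(c)=[\alpha_0^{0,2}] \quad\text{in } H^2(X,\mathcal O_X),
\]
where $q_X$ is the map induced by $\mathbb R_X\to\mathcal O_X$. The diagram in Lemma \ref{lem-coeff-fiber} only gives $\rho_{\mathcal O}(q_X(c))=\eta$. The identity holds because $\beta\mapsto\beta^{0,\bullet}$ is a morphism of resolutions $\mathcal A^\bullet_X\otimes_{\mathbb R}\mathbb C\to\mathcal A^{0,\bullet}_X$ lying over $\mathbb C_X\to\mathcal O_X$, since $(d\beta)^{0,k+1}=\bar\partial\beta^{0,k}$. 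The paper uses the same identification, but only on fibers, when it says that $(1,1)$-representable classes satisfy $q_t(\sigma_t)=0$.

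The compatibility of the edge maps with restriction to $S'$, which you flagged as the main obstacle, is immediate from the presheaf-to-sheafification description of $\rho$.

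\emph{Comparison.} Your version is shorter. The paper's version is more modular, treating Corollary \ref{cor-H11-tf} as a black box, and it records along the way that every $\sigma_t$ with $t\in S_3$ is of type $(1,1)$.
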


\begin{remark}\label{rem-lk-moreover}
In the implication from condition \eqref{cond-lk-section} to condition
\eqref{cond-lk-total}, the polydisc $S_1$ may be chosen inside $S_2$ in
such a way that, for each $t\in S_1$, the class $\sigma_t$ is a
K\"ahler class on $X_t$. Indeed, the K\"ahler form constructed on
$X_{S_1}$ has fiberwise cohomology class $\sigma_t$.
To see this, Corollary \ref{cor-H11-tf} gives a smooth real $d$-closed $(1,1)$-form whose restriction to $X_t$ represents $\sigma_t$, and Theorem \ref{thm-pair-kahler} modifies this form by a $\sqrt{-1}\partial\overline{\partial}$-term (after shrinking the base), hence without changing the  cohomology class of its restriction to each fiber.
\end{remark}

\begin{remark}\label{rem-tk-zariski}
In the implication from condition \eqref{cond-lk-section} to condition
\eqref{cond-lk-total}, the subset $U$ in condition \eqref{cond-lk-section} of Corollary \ref{cor-local-kahler}  is used only to invoke
Corollary \ref{cor-H11-tf}.   Under the condition \eqref{cond-lk-section}, it  may happen that 
$U\cap S_1=\emptyset$. This causes no difficulty, because the role of
$U$ is to produce the global smooth real $d$-closed $(1,1)$-form
$\widetilde\alpha$; the  K\"ahlerness statement of $X$ over $S_1$ is then obtained by a local argument around the compact
submanifold $X_0$.
\end{remark}

\begin{proof}[Proof of Corollary \ref{cor-local-kahler}]
Condition \eqref{cond-lk-total} clearly implies condition \eqref{cond-lk-section}.

Conversely, assume condition \eqref{cond-lk-section}. Denote by
\[
\pi_2:X_{S_2}:=\pi^{-1}(S_2)\to S_2
\]
the restricted family, and set
\[
E_2:=R^2(\pi_2)_*\mathcal O_{X_{S_2}}.
\]
Under the canonical restriction isomorphism
\[
(R^2\pi_*\mathbb R)|_{S_2}
\cong
R^2(\pi_2)_*\mathbb R,
\]
let
\[
\sigma_2:=\sigma|_{S_2}\in H^0(S_2,R^2(\pi_2)_*\mathbb R).
\]
We denote by
\[
\eta\in H^0(S_2,E_2)
\]
the image of $\sigma_2$ under the natural morphism induced by
\[
R^2(\pi_2)_*\mathbb R
\to
R^2(\pi_2)_*\mathcal O_{X_{S_2}}=E_2 .
\]

As in the first paragraph of the proof of Theorem \ref{thm-H11-lift}, there exists a dense analytic Zariski open subset  \[V\subset S_2\] such that $E_2|_V$ is locally free
and, for each $t\in V$, the base-change morphism
\[
bc_t: (E_2)_t\otimes_{\mathcal O_{S_2,t}}k(t)
\to
H^2(X_t,\mathcal O_{X_t})
\]
is an isomorphism.  Moreover, the complement  $S_2\setminus V$
is a proper analytic subset of $S_2$.
Consequently, since $U$ is dense in the analytic Zariski topology on $S_2$,  
the subset $U\cap V$ is not contained in any proper analytic subset of
$S_2$.
Recall the condition that for each $t\in U\cap V$, the class $\sigma_t$ is represented by a
smooth real $d$-closed $(1,1)$-form on $X_t$. 
Then  its image under  $q_t:H^2(X_t,\mathbb{R})\to H^2(X_t,\mathcal{O}_{X_t})$
is zero, where $q_t$ is induced by $\mathbb R_{X_t}\to\mathcal O_{X_t}$. By Lemma \ref{lem-coeff-fiber}, we have
\[
        \operatorname{bc}_t\bigl(\operatorname{ev}_t(\eta)\bigr)
        =
        q_t(\sigma_t)
        =
        0
\]
for each $t\in U\cap V$.
Since  the base-change morphism $\operatorname{bc}_t$ is an
isomorphism for $t\in V$,  $ \operatorname{ev}_t(\eta)=0.$
Since $U\cap V$ is dense in the analytic Zariski topology of $S_2$,
Lemma \ref{lem-fiber-torsion} gives
$
        \eta\in H^0\bigl(S_2,\operatorname{Tor}(E_2)\bigr).$
As $E_2|_V$ is locally free, one has
$ \operatorname{Tor}(E_2)|_V=0,$
and therefore
  \[
\eta|_V=0\in H^0(V,E_2|_V).
\]

Since $X_0$ is K\"ahler, the Kodaira--Spencer stability theorem and the deformation invariance of the Hodge numbers for fiberwise K\"ahler families imply
that, after replacing $S_2$ by a smaller polydisc \[S_3\Subset S_2\]
centered at $0$, the fiber $X_t$ is K\"ahler for each $t\in S_3$  and  the Hodge number
$h^{0,2}(X_t)$ is constant on $S_3$. By the Grauert's base change theorem (e.g., \cite[Chapter I, (8.5) Theorem]{BHPV04}),
\[
E_3:=R^2(\pi_3)_*\mathcal O_{X_{S_3}}
\cong
E_2|_{S_3}
\]
is locally free.

Note that $(\eta|_{S_3})|_{V\cap S_3}=0$ and 
$V\cap S_3$ is a nonempty open subset of $S_3$.  Since $E_3$
is locally free and $S_3$ is connected, we have that
\[
\eta|_{S_3}=0.
\]
 Applying Lemma \ref{lem-coeff-fiber} to the restricted
family $\pi_3:X_{S_3}\to S_3$
gives that
\[
q_t(\sigma_t)=0
\quad\text{in}\quad
H^2(X_t,\mathcal O_{X_t})
\]
for each $t\in S_3$.
Since $X_t$ is K\"ahler for each $t\in S_3$, the Hodge decomposition and $q_t(\sigma_t)=0$  give that the $(0,2)$-part of
$\sigma_t$ is zero. Since $\sigma_t$ is real, its $(2,0)$-part  is also zero. Hence the class $\sigma_t$ is represented by a smooth
real $d$-closed $(1,1)$-form on $X_t$ for each $t\in S_3$.

Now Corollary \ref{cor-H11-tf} applies to the restricted family
\[
\pi_3:X_{S_3}\to S_3
\]
and to the restricted section $\sigma|_{S_3}$. Then there exists a smooth real
$d$-closed $(1,1)$-form $\widetilde\alpha$ on $X_{S_3}$ such that
\[
[\widetilde\alpha|_{X_t}]=\sigma_t
\qquad\text{for each }t\in S_3.
\]
In particular,
\[
[\widetilde\alpha|_{X_0}]
=
\sigma_0
=
[\omega_0],
\]
where $\omega_0$ is the K\"ahler form on $X_0$ appearing in condition
\eqref{cond-lk-section}.

We now apply Theorem \ref{thm-pair-kahler} to the complex manifold
$X_{S_3}$ and  the compact complex submanifold $X_0\subset X_{S_3}$, to obtain that
 there exist an open neighborhood $N\subset X_{S_3}$ of $X_0$ and a
smooth real-valued function $\Psi\in C^\infty(N,\mathbb R)$ such that
\[
\widehat\omega
:=
\widetilde\alpha|_N+\sqrt{-1}\partial\bar\partial\Psi
\]
is a K\"ahler form on $N$.

 Since $\pi_3$ is proper, 
 the image $\pi_3(X_{S_3}\setminus N)$
is closed in $S_3$ such that 
\[
0\notin \pi_3(X_{S_3}\setminus N).
\]
Therefore we may choose a polydisc $S_1\Subset S_3$ centered at $0$ such
that
\[
S_1\cap \pi_3(X_{S_3}\setminus N)=\emptyset.
\]
Then
\[
X_{S_1}=\pi^{-1}(S_1)\subset N.
\]
Consequently, the restriction $\widehat\omega|_{X_{S_1}}$ is a
K\"ahler form on $X_{S_1}$. This proves condition
\eqref{cond-lk-total} and thus completes the proof.
\end{proof}

\section{Examples: sharpness of the type-$(1,1)$ restriction condition}

In this section, we will show that the type-$(1,1)$ restriction condition in Theorem \ref{intro-cor-local-kahler} cannot, in general, be imposed only on a proper analytic subset of the base.  We first construct a proper holomorphic submersion between
complex manifolds whose fibers are all  $K3$ surfaces,  by constructing a local period curve.  The resulting morphism carries a flat real cohomology class whose value on the central fiber is a K\"ahler class, while its values on all nearby fibers fail to be of type $(1,1)$.  We then show that, although the central fiber is K\"ahler, the total space is not K\"ahler after any shrinking of the base. Furthermore, by taking products with a polydisc, this one-parameter example yields examples over higher-dimensional bases in which the type-$(1,1)$ condition holds along a divisor of the base, while the total space still fails to be K\"ahler near the central K\"ahler fibers.

\subsection{Construction of a family of $K3$ surfaces}

The construction is based on the basic period theory of $K3$ surfaces (\cite[Chapter 6]{Huy16}).  Let $X_0$ be a complex $K3$ surface.  Then $X_0$ is compact K\"ahler by  Todorov and Siu.
Let
\[
        q_0:H^2(X_0,\mathbb Z)\times H^2(X_0,\mathbb Z)\to\mathbb Z
\]
be the cup-product intersection form. We denote by the same notation \(q_0\)
its \(\mathbb R\)-bilinear and \(\mathbb C\)-bilinear extensions to
\(H^2(X_0,\mathbb R)\) and \(H^2(X_0,\mathbb C)\), respectively. 
Let
\[
        D_{X_0}:=
        \{[\omega]\in\mathbb P(H^2(X_0,\mathbb C)):
        q_0(\omega,\omega)=0,\ q_0(\omega,\overline\omega)>0\}
\]
be the period domain associated with \(H^2(X_0,\mathbb Z)\) (\cite[Chapter 6, Section 1.1]{Huy16}). 

Note that $H^{2,0}(X_0)$ is $1$-dimensional and any generator of $H^{2,0}(X_0)$ is non-degenerate. Then one  easily obtains
the following  criterion (e.g., \cite[Chapter 6, Section 2.4]{Huy16}) which  will be used repeatedly: if \(X\) is a K3 surface,
\(0\neq\omega\in H^{2,0}(X)\), and \(0\neq\beta\in H^2(X,\mathbb R)\), then
\[
        \beta\in H^{1,1}(X,\mathbb R)
        \Longleftrightarrow
        q(\beta,\omega)=0.
\]

We now construct a period curve $\gamma$ as follows.
Take a  K\"ahler class  $\alpha$ on $X_0$.
 Choose any generator
$\omega_0\neq 0$ of $H^{2,0}(X_0)$.
Then 
\[
q_0(\alpha,\omega_0)=q_0(\alpha,\overline{\omega}_0)=0.
\]
Set
\[
a:=q_0(\alpha,\alpha)>0\text{  and  }Q:=q_0(\omega_0,\overline{\omega}_0)>0.
\]
For $s$ sufficiently small, define the non-zero class
\begin{equation}\label{def-omegas}
    \omega_s
:=
\omega_0+s\alpha-\frac{a}{2Q}s^2\overline{\omega}_0.
\end{equation}
Then
\[
q_0(\omega_s,\omega_s)=0\text{  and  }q_0(\omega_s,\overline{\omega_s})>0
\]
for $s$ sufficiently small. Then  any $[\omega_s]$ (for $s$ small enough) is in the period domain $D_{X_0}$.   Thus we have a holomorphic map
\[
\gamma:S\to D_{X_0},\qquad s\longmapsto[\omega_s],
\]
 for a certain small disk $S$ centered at $0$.

We now utilize  the  period curve $\gamma$ to produce a proper holomorphic submersion $\pi:X\to S$ via the local Torelli theorem.
Let
\[
\Pi:\mathfrak X\to \operatorname{Def}(X_0)
\]
be the smooth  universal deformation of $X_0$ (\cite[Chapter 6, Corollary 2.7]{Huy16}).  Recall the local Torelli theorem (\cite[Chapter 6, Proposition 2.8]{Huy16}) that the  period map
\[
P:\operatorname{Def}(X_0)\to D_{X_0}
\]
is a local isomorphism.  Note that $P(0)=\gamma(0)$. Then after shrinking both $\operatorname{Def}(X_0)$ and $S$ (still a disk centered at $0$), we 
have a well-defined morphism
\[
\iota:=P^{-1}\circ\gamma:S\to \operatorname{Def}(X_0).
\]

Set
\[
X:=\mathfrak X\times_{\operatorname{Def}(X_0)} S.
\]
As smoothness and properness are stable under base change, \[\pi: X \to S\] is a smooth proper morphism. Since $S$ is smooth,  $\pi$ is a proper holomorphic submersion between complex manifolds.  Clearly, each fiber $X_t:=\pi^{-1}(t)$ is a $K3$ surface.

\subsection{Failure of the K\"ahlerness of the total space  near the central K\"ahler fiber}

We will prove the following proposition.

\begin{proposition}\label{prop-k3}
Let \[
\pi: X\to S
\]  be the proper holomorphic submersion 
between complex manifolds constructed above, where $S\subset\mathbb C$ is a disk  centered at $0$.   Then there exists  a flat section
\[
\sigma\in H^0(S,R^2\pi_*\mathbb R),
\]
such that:
\begin{enumerate}[\rm{(}i\rm{)}]
\item $\sigma(0)$ (as in the convention fixed in Section \ref{sec-preli}) is a K\"ahler class on $X_0$;
\item $\sigma(s)$ is not of type $(1,1)$ on $X_s$ for each $0\neq s\in S$.
\end{enumerate}
Furthermore, for any smaller disk $0\in S'\subset S$, the total space
$X_{S'}:=\pi^{-1}(S')$ is not K\"ahler.
\end{proposition}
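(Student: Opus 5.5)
The plan is to take $\sigma$ to be the flat section whose value at $0$ is the Kähler class $\alpha$. Since $S$ is a disk, the local system $R^2\pi_*\mathbb R$ is constant (as in Step \ref{step-lift-1} of the proof of Theorem \ref{thm-H11-lift}). So $\alpha\in H^2(X_0,\mathbb R)$ extends uniquely to $\sigma\in H^0(S,R^2\pi_*\mathbb R)$. Via the parallel-transport identification $H^2(X_s,\mathbb R)\cong H^2(X_0,\mathbb R)$, which is an isometry for $q$, the value $\sigma(s)$ corresponds to $\alpha$. Property (i) then holds by the choice of $\alpha$.

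For (ii), I will use that $X$ is pulled back from the universal deformation via $\iota=P^{-1}\circ\gamma$. Hence the period of $X_s$, in the marking induced by parallel transport, is $P(\iota(s))=\gamma(s)=[\omega_s]$. In other words, $H^{2,0}(X_s)$ is spanned by the transported class $\omega_s$. By the criterion recalled above, $\sigma(s)$ is of type $(1,1)$ if and only if $q_0(\alpha,\omega_s)=0$. Since $q_0(\alpha,\omega_0)=q_0(\alpha,\overline\omega_0)=0$, we get
\[
q_0(\alpha,\omega_s)=s\,q_0(\alpha,\alpha)=sa,
\]
which is nonzero for $s\neq0$. This gives (ii).

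For the non-Kählerness of $X_{S'}$, I argue by contradiction. Suppose $\omega$ is a Kähler form on $X_{S'}$, and write $\beta_s:=[\omega|_{X_s}]$. Since $\omega$ is $d$-closed on the total space, its fiberwise classes form a flat section $\beta\in H^0(S',R^2\pi_*\mathbb R)$. This section is of type $(1,1)$ on every fiber, because $\omega$ is a $(1,1)$-form. Transporting to $X_0$ gives a fixed class $b\in H^2(X_0,\mathbb R)$ with
\[
q_0(b,\omega_s)=0\quad\text{for all } s\in S'.
\]
Expanding in $s$, the coefficient of $s^0$ gives $q_0(b,\omega_0)=0$, the coefficient of $s$ gives $q_0(b,\alpha)=0$, and the coefficient of $s^2$ gives $q_0(b,\overline\omega_0)=0$. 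Moreover $b=\beta_0$ is a Kähler class on $X_0$. I will derive a contradiction from $q_0(b,\alpha)=0$ using the fact that $\alpha$ and $b$ both lie in the positive cone of $H^{1,1}(X_0,\mathbb R)$. This cone has signature $(1,19)$, and for two classes in the same positive cone component one has $q_0(\alpha,b)>0$ by the reverse Cauchy--Schwarz inequality. Hence $q_0(\alpha,b)=0$ is impossible.

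The main obstacle is making rigorous the identification of the period of $X_s$ with $[\omega_s]$ under the parallel-transport marking. This requires checking that the marking used for $P$ on $\operatorname{Def}(X_0)$ is compatible with the flat trivialization of $R^2\pi_*\mathbb R$ over $S$ pulled back by $\iota$. I would handle it by noting that both are defined by the Gauss--Manin (local system) trivialization over contractible bases, and that base change commutes with $R^2\pi_*$ of the constant sheaf.
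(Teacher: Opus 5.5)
Your proposal is correct and follows the paper's proof essentially step by step. It uses the same flat section determined by $\alpha$, the same identification $\tau_s(H^{2,0}(X_s))=\mathbb C\omega_s$ coming from $P_\pi=P\circ\iota=\gamma$, the same computation $q_0(\alpha,\omega_s)=sa$, and the same polynomial identity forcing $q_0(\beta(0),\alpha)=0$ for a putative K\"ahler form on $X_{S'}$. The only difference is the final contradiction: the paper argues directly that $q_0(\beta(0),\alpha)=\int_{X_0}\beta(0)\wedge\alpha>0$, since the wedge of two K\"ahler forms on a surface is a positive volume form, whereas your reverse Cauchy--Schwarz argument in the positive cone of signature $(1,19)$ is also valid but uses more structure than needed.
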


\begin{proof}
For each \(s\in S\), let
\[
        \tau_s:H^2(X_s,\mathbb C)\xrightarrow{\cong}H^2(X_0,\mathbb C)
\]
be the isomorphism induced by the Gauss--Manin parallel transport. Since \(S\) is a disk, the local system \(R^2\pi_*\mathbb C\) is constant and  these
isomorphisms globally trivialize \(R^2\pi_*\mathbb C\).

Furthermore, by Ehresmann’s theorem, the Gauss–Manin parallel transport is induced by differentiable identifications of the fibers. Hence it preserves the cup-product intersection pairing (e.g., \cite[Definition 3.1 and the following paragraph]{MM17}):
Denote by 
\[q_s: H^2\left(X_s, \mathbb{C}\right) \times H^2\left(X_s, \mathbb{C}\right) \rightarrow \mathbb{C}\] 
 the intersection form of the fiber $X_s$.
Then  for any $u, v \in H^2\left(X_s, \mathbb{C}\right)$, we have 
\begin{equation}\label{pre-inter}
    q_s(u, v)=q_0\left(\tau_s(u), \tau_s(v)\right).
\end{equation}

Denote by
\[
    P_{\pi}:S\to D_{X_0},  \text{   }  s\mapsto \left[\tau_s\left(H^{2,0}\left(X_s\right)\right)\right] 
\]
 the period map  (\cite[Chapter 6, Proposition 2.3]{Huy16}) of the family \(\pi: X\to S\), defined with
respect to the aforementioned Gauss--Manin trivialization. By the definitions of $\pi$ and  \(\iota\),
we have
\[
        P_{\pi}=P\circ\iota=\gamma.
\]
Consequently, 
\begin{equation}\label{taus-omegas}
     \tau_s\bigl(H^{2,0}(X_s)\bigr)=\mathbb C\omega_s.
\end{equation}

 Let
\[
\sigma\in H^0(S,R^2\pi_*\mathbb R)
\]
be the flat section corresponding to the fixed class $\alpha$. Then  we have
\[
        \tau_s(\sigma(s))=\alpha .
\]
Thus by \eqref{pre-inter} we obtain
\[
        q_s(\sigma(s),H^{2,0}(X_s))
        =
        q_0(\alpha,\mathbb C\omega_s).
\]
Since by \eqref{def-omegas}
\[
        q_0(\alpha,\omega_s)
        =
        q_0\!\left(\alpha,
        \omega_0+s\alpha-\frac{a}{2Q}s^2\overline{\omega}_0
        \right)
        =
        s\,q_0(\alpha,\alpha)
        =
        sa
        \neq 0
\]
for \(s\neq0\), the class \(\sigma(s)\) is not orthogonal to
\(H^{2,0}(X_s)\). Hence \(\sigma(s)\) is not of type \((1,1)\) on
\(X_s\) for $s\neq 0$. Clearly, $\sigma(0)=\alpha$ is of type \((1,1)\).

It remains to prove that the total space is not K\"ahler after any shrinking of the base.
Assume, to the contrary, that $ X_{S'}$ admits a K\"ahler form $\Omega$ for some
disk $0\in S'\subset S$. Then the fiberwise classes
\[
\beta(s):=[\Omega|_{X_s}]
\]
define a flat section of $R^2\pi_*\mathbb R|_{S'}$, and $\beta(s)$ is  of type
\((1,1)\) (K\"ahler class) on \(X_s\) for each $s\in S'$.   By \eqref{taus-omegas}
 we choose
\[
        \Omega_s:=\tau_s^{-1}(\omega_s)\in H^{2,0}(X_s).
\]
Since \(\beta(s)\in H^{1,1}(X_s,\mathbb R)\), we have
\[
        q_s(\beta(s),\Omega_s)=0.
\]
By  \eqref{pre-inter} and the fact  that
\(\tau_s(\beta(s))=\beta(0)\), we obtain
\[
        0
        =
        q_s(\beta(s),\Omega_s)
        =
        q_0(\beta(0),\omega_s)
\]
for each \(s\in S'\).
Then we obtain by \eqref{def-omegas} the polynomial identity
\[
q_0(\beta(0),\omega_0)
+s\,q_0(\beta(0),\alpha)
-\frac{a}{2Q}s^2q_0(\beta(0),\overline{\omega}_0)
=0.
\]
Hence
\[
q_0(\beta(0),\alpha)=0.
\]
This is impossible, because $\beta(0)$ and $\alpha$ are K\"ahler classes on 
$X_0$, and thus
\[
q_0(\beta(0),\alpha)=\int_{X_0}\beta(0)\wedge\alpha>0.
\]
This contradiction proves that $ X_{S'}$ is not K\"ahler, and the proof is complete.
\end{proof}

\begin{remark}\label{rem-product-k3}
The one-dimensional-base example above immediately gives higher-dimensional
examples by taking products with a polydisc. More precisely, for \(n\geq 2\),
let
\[
        B:=S\times \Delta^{n-1},
        \qquad
         Y:= X\times \Delta^{n-1},
\]
and let
\[
        \Pi: Y\to B
\]
be the product family. If \(p_1:B\to S\) denotes the first projection, then
\[
        \Sigma:=p_1^{-1}\sigma\in H^0(B,R^2\Pi_*\mathbb R)
\]
is a flat section. Under the natural identification
\[
        Y_{(s,u)}\cong X_s,
\]
the class \(\Sigma(s,u)\) is of type \((1,1)\) precisely when
\(\sigma(s)\) is of type \((1,1)\) on \(X_s\). 
In particular, the divisor $\{0\}\times \Delta^{n-1}\subset B$ consists entirely of points over which $\Sigma$ is of type $(1,1)$, whereas $\Sigma$ fails to be of type $(1,1)$ at all points with $s\neq 0$.
Clearly,  the total space is not K\"ahler after any shrinking of the base around $(0,0)$.
\end{remark}

\appendix

 \section{Leray spectral sequences}

For completeness and to fix the conventions and notations, we include proofs of several standard facts on Leray spectral sequences.

We first prove the following topological lifting lemma using the Leray spectral sequence.  Note that the statement of \cite[tag 01F4]{Stacks} gives the isomorphism 
\[
H^q(X,\mathcal F)\cong H^0(Y,R^qf_*\mathcal F)
\]
under the vanishing condition
\[
H^p(Y,R^qf_*\mathcal F)=0
\text{ for all }q\text{ and all }p>0.
\]
But it does not explicitly specify  the morphism which realizes this isomorphism.  
For applications, we give the following result.

\begin{lemma}\label{lem-leray-edge}
Let $f:X\to S$ be a continuous map, and let $\mathcal F$ be a sheaf of abelian groups on $X$.
Assume that
\[
H^p(S,R^qf_*\mathcal F)=0
\qquad\text{for each }p>0\text{ and each }q\geq 0.
\]
Then, for each $N\geq 0$, the natural  edge homomorphism
\[
\rho_N:H^N(X,\mathcal F)\to H^0(S,R^Nf_*\mathcal F)
\]
of the Leray spectral sequence
is an isomorphism.
\end{lemma}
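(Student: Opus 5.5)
The approach is to show that the Leray spectral sequence degenerates so completely that only the column $p=0$ survives, and then to identify $\rho_N$ with the composite of the abutment map and the edge inclusion. Recall the Leray spectral sequence $E_2^{p,q}=H^p(S,R^qf_*\mathcal F)\Rightarrow H^{p+q}(X,\mathcal F)$. It arises from the Grothendieck spectral sequence for $\Gamma(S,-)\circ f_*=\Gamma(X,-)$. Concretely, take an injective resolution $\mathcal F\to\mathcal I^\bullet$ on $X$ and a Cartan--Eilenberg resolution of the complex $f_*\mathcal I^\bullet$. This yields a double complex $K^{\bullet,\bullet}$, and the relevant filtration on its total complex computes $H^\bullet(X,\mathcal F)$. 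The edge homomorphism $\rho_N$ is the composite
\[
H^N(X,\mathcal F)\twoheadrightarrow E_\infty^{0,N}\hookrightarrow E_2^{0,N}=H^0(S,R^Nf_*\mathcal F).
\]
Here the first arrow is the quotient by $F^1H^N$, and the second exists because $E_r^{0,N}$ receives no incoming differentials, so $E_{r+1}^{0,N}=\ker(d_r)\subset E_r^{0,N}$. Equivalently, and useful for later compatibility lemmas, $\rho_N$ is the map sending a class to the section $t\mapsto$ (germ of its restriction to $f^{-1}$ of neighborhoods of $t$), i.e. $H^N(X,\mathcal F)\to H^0(S,R^Nf_*\mathcal F)$ induced by sheafifying $V\mapsto H^N(f^{-1}V,\mathcal F)$. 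I will use the spectral-sequence description and note that the two agree by the standard construction.

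The key steps are as follows.
\begin{enumerate}
\item By hypothesis $E_2^{p,q}=0$ for all $p>0$. Hence every differential $d_r\colon E_r^{p,q}\to E_r^{p+r,q-r+1}$ with $r\ge2$ has either zero source or zero target: the target has $p+r>0$. Thus $E_2=E_\infty$, and in particular $E_\infty^{0,N}=E_2^{0,N}$, so the inclusion in the composite is the identity.
\item The filtration $F^\bullet H^N$ is finite, exhaustive and separated, with $F^pH^N/F^{p+1}H^N\cong E_\infty^{p,N-p}$. For $p\ge1$ these graded pieces vanish. Since $F^{N+1}H^N=0$, descending induction gives $F^1H^N=0$. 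Therefore $H^N(X,\mathcal F)=F^0/F^1\cong E_\infty^{0,N}$, so the first arrow is an isomorphism.
\item Combining these, $\rho_N$ is a composite of two isomorphisms, hence an isomorphism.
\end{enumerate}

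The only delicate point, which I expect to be the main obstacle, is bookkeeping rather than mathematics. One must fix conventions so that the map denoted $\rho_N$ is exactly the edge map from the filtration, and check that it coincides with the natural restriction-to-fibers description used in Lemma~\ref{lem-H2-lift} and Lemma~\ref{lem-bc-edge}. I would handle this by choosing a flasque resolution $\mathcal F\to\mathcal I^\bullet$ on $X$; then $f_*\mathcal I^\bullet$ is a flasque complex on $S$. Its $H^0$-part gives the map $H^N(\Gamma(X,\mathcal I^\bullet))=H^N(\Gamma(S,f_*\mathcal I^\bullet))\to\Gamma(S,\mathcal H^N(f_*\mathcal I^\bullet))=H^0(S,R^Nf_*\mathcal F)$, which is visibly the edge map. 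With this identification, steps 1--3 above finish the proof.
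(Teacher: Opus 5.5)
Your proof is correct and follows the paper's route. Vanishing of $E_2^{p,q}$ for $p>0$ gives $E_\infty^{0,N}=E_2^{0,N}$ and $F^1H^N(X,\mathcal F)=0$, so the edge map $H^N(X,\mathcal F)=F^0\to F^0/F^1=E_\infty^{0,N}=E_2^{0,N}$ is an isomorphism; your extra identification of $\rho_N$ with the presheaf-to-sheafification map is not needed here, since the paper handles that separately by citation in Lemmas \ref{lem-H2-lift} and \ref{lem-bc-edge}.
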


\begin{proof}
Consider the Leray spectral sequence 
\[
E_2^{p,q}=H^p(S,R^qf_*\mathcal F)
\to
H^{p+q}(X,\mathcal F).
\]
By assumption, $E_2^{p,q}=0$ for all $p>0$ and all $q\geq 0$.
For fixed $N\geq 0$, the induced filtration on $H^N(X,\mathcal F)$ satisfies
\[
H^N(X,\mathcal F)=F^0H^N(X,\mathcal F)
\supset F^1H^N(X,\mathcal F)
\supset \cdots
\]
and
\[
F^pH^N(X,\mathcal F)/F^{p+1}H^N(X,\mathcal F)
\cong E_\infty^{p,N-p}.
\]
Since $E_\infty^{p,N-p}=0$ for all $p>0$, we have $F^1H^N(X,\mathcal F)=0.$
Furthermore,  $E_2^{0,N}=E_\infty^{0,N}$.

By definition, the $p=0$  edge homomorphism is the composite
\[
H^N(X,\mathcal F)
=
F^0H^N(X,\mathcal F)
\to
F^0H^N(X,\mathcal F)/F^1H^N(X,\mathcal F)
=
E_\infty^{0,N}
=
E_2^{0,N}.
\]
Since $F^1H^N(X,\mathcal F)=0$, this map is an isomorphism. Since
\[
E_2^{0,N}=H^0(S,R^Nf_*\mathcal F),
\]
the Leray edge homomorphism
\[
\rho_N:H^N(X,\mathcal F)\to H^0(S,R^Nf_*\mathcal F)
\]
is an isomorphism.
\end{proof}

We now establish the following compatibility between the edge homomorphism of the Leray spectral sequence for the constant sheaf and the restriction of cohomology classes.

\begin{lemma}\label{lem-H2-lift}
Let $\pi:\mathcal X\to U$ be a proper holomorphic submersion onto a polydisc
$U\subset \mathbb C^m$ centered at $0$.
For each $b\in U$, set $X_b:=\pi^{-1}(b)$.
Let
\[
\rho_{\mathbb R}:H^2(\mathcal X,\mathbb R)
\to H^0(U,R^2\pi_*\mathbb R)
\]
be the edge homomorphism of the Leray spectral sequence for the sheaf
$\mathbb R$ on $\mathcal X$.
Then, under the canonical identification
\[
(R^2\pi_*\mathbb R)_b\cong H^2(X_b,\mathbb R),
\]
the germ $\rho_{\mathbb R}(c)_b$ is identified with the restriction class
\[
c|_{X_b}\in H^2(X_b,\mathbb R).
\]
\end{lemma}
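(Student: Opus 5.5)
The plan is to compare the Leray spectral sequence of $\pi$ with that of the inclusion $i_b:X_b\hookrightarrow\mathcal X$, using naturality of edge homomorphisms under restriction, and then to identify the stalk of $R^2\pi_*\mathbb R$ at $b$ via the proper base change theorem.

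First I recall the structure of the canonical identification. The stalk is the colimit
\[
(R^2\pi_*\mathbb R)_b=\varinjlim_{b\in B}H^2(\pi^{-1}(B),\mathbb R),
\]
taken over open neighbourhoods $B$ of $b$; this holds because $R^2\pi_*\mathbb R$ is the sheafification of the presheaf $B\mapsto H^2(\pi^{-1}(B),\mathbb R)$. The canonical map from this colimit to $H^2(X_b,\mathbb R)$ is induced by the restrictions $H^2(\pi^{-1}(B),\mathbb R)\to H^2(X_b,\mathbb R)$. Since $\pi$ is proper and $\mathcal X$ is locally compact Hausdorff, the proper base change theorem says this map is an isomorphism. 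Alternatively, by Ehresmann, for small polydiscs $B$ the inclusion $X_b\hookrightarrow\pi^{-1}(B)\cong X_b\times B$ is a homotopy equivalence.

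Next I describe the edge homomorphism concretely. The map $\rho_{\mathbb R}$ is the composite
\[
H^2(\mathcal X,\mathbb R)\to H^0(U,\pi_*\text{-presheaf}\ B\mapsto H^2(\pi^{-1}(B),\mathbb R))\to H^0(U,R^2\pi_*\mathbb R),
\]
sending $c$ to the section whose germ at $b$ is the class of $c|_{\pi^{-1}(B)}$ in the colimit. To justify this, I compute the spectral sequence with an injective (or flabby) resolution $\mathbb R\to I^\bullet$ on $\mathcal X$. Then $H^2(\mathcal X,\mathbb R)=H^2(\Gamma(U,\pi_*I^\bullet))$, and $R^2\pi_*\mathbb R=\mathcal H^2(\pi_*I^\bullet)$. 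The $p=0$ edge map is induced by sending a cocycle in $\Gamma(U,\pi_*I^2)$ to its image in $\Gamma(U,\mathcal H^2(\pi_*I^\bullet))$. This takes its germ at $b$ to the class of the restricted cocycle over $\pi^{-1}(B)$, which is exactly the described colimit element.

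Finally, I compose with the base change map. The germ $\rho_{\mathbb R}(c)_b$ maps to the restriction of $c|_{\pi^{-1}(B)}$ to $X_b$, which is $c|_{X_b}$ by functoriality of restriction. That gives the claim.

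I expect the main obstacle to be the middle step: pinning down that the abstract $p=0$ edge homomorphism agrees with the naive "germ of restriction" map. I would handle it by making the Cartan–Eilenberg/Grothendieck description explicit: the edge map $E^2\to E_2^{0,2}$ factors through $H^2(\Gamma(U,\pi_*I^\bullet))\to\Gamma(U,\mathcal H^2(\pi_*I^\bullet))$, and the stalk of $\mathcal H^2$ commutes with filtered colimits. Everything else is formal.
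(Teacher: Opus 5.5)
Your proposal is correct and follows essentially the same route as the paper. Both identify the Leray edge map with the presheaf-to-sheafification map (the paper cites EGA III, \S 12.2.5, while you verify it directly with an injective resolution), use that the canonical identification $(R^2\pi_*\mathbb R)_b\cong H^2(X_b,\mathbb R)$ is induced by restrictions to $X_b$ (isomorphisms over small polydiscs by Ehresmann), and then conclude by functoriality of restriction.
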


\begin{proof}
By definition, $R^2\pi_*\mathbb R$ is the sheaf associated with the presheaf
\[
\mathcal P^2:V\longmapsto \mathcal P^2(V):=H^2(\pi^{-1}(V),\mathbb R),
\]
where $V$ runs through the open subsets of $U$.

 Note that the canonical morphism
from the global sections of the  presheaf $\mathcal{P}^2$ to the global sections of its
associated sheafification $R^2\pi_*\mathbb R$  is exactly the edge homomorphism of the  Leray spectral
sequence
\[
E_2^{p,q}
=
H^p(S,R^qf_\ast\mathcal{H})
\Rightarrow
H^{p+q}(X,\mathcal{H})
\]
(\cite[\S 12.2.5]{EGAIII} or \cite[\S 5, p. 31]{AHK73}).
Then the edge
homomorphism  sends a class $c\in H^2(\mathcal X,\mathbb R)$ to the section
of $R^2\pi_*\mathbb R$ locally represented on $V$ by the restriction class
\[
c|_{\pi^{-1}(V)}\in H^2(\pi^{-1}(V),\mathbb R).
\]
Thus the germ $\rho_{\mathbb R}(c)_b$ is represented by the element
\[
c|_{\pi^{-1}(W)}\in H^2(\pi^{-1}(W),\mathbb R)
\]
for any sufficiently small open neighborhood $W\subset U$ of $b$.

Choose $W$ to be a  small polydisc centered at $b$. Since
$\pi$ is a proper holomorphic submersion, the restricted map
\[
\pi^{-1}(W)\to W
\]
is a $C^\infty$ locally trivial fiber bundle by Ehresmann's theorem. Since $W$ is contractible, the inclusion
\[
i_{b,W}:X_b\hookrightarrow \pi^{-1}(W)
\]
is a homotopy equivalence. Therefore the restriction map
\[
i_{b,W}^*:H^2(\pi^{-1}(W),\mathbb R)
\to H^2(X_b,\mathbb R)
\]
is an isomorphism.

The canonical identification
\[
(R^2\pi_*\mathbb R)_b\cong H^2(X_b,\mathbb R)
\]
is precisely the map induced on germs by these restriction maps
$i_{b,W}^*$. Hence the image of $\rho_{\mathbb R}(c)_b$ under this
identification is
\[
i_{b,W}^*\bigl(c|_{\pi^{-1}(W)}\bigr)
=
c|_{X_b}.
\]
This proves the assertion.
\end{proof}

For coherent sheaves, the compatibility is slightly more complicated. We now establish the following compatibility among the Leray edge homomorphism, the base-change map, and the restriction of cohomology classes.

\begin{lemma}
\label{lem-bc-edge}
Let $f:X\to S$ be a proper morphism of complex analytic spaces with analytic
fibers denoted by $X_t$, let $\mathcal F$ be a coherent
$\mathcal O_X$-module sheaf, let $t\in S$ be a point, and let $q\geq 0$ be an
integer. Let $ \mathfrak m_t\subset \mathcal O_{S,t}$
be the maximal ideal and $ k(t)=\mathcal O_{S,t}/\mathfrak m_t$ the residue field.
Denote by $ j_t:X_t\hookrightarrow X$
the natural closed embedding, and by
\[
        \mathcal F_t:=j_t^*\mathcal F 
\]
the restriction of $\mathcal F$.
Let
\[
        \operatorname{res}_t:
        H^q(X,\mathcal F)
        \to
        H^q(X,j_{t,*}\mathcal F_t)
        \cong
        H^q(X_t,\mathcal F_t)
\]
be the natural restriction map induced by the natural morphism  $ \mathcal F
        \to
        j_{t,*}\mathcal F_t$.

Let
\[
        \rho^q:
        H^q(X,\mathcal F)
        \to
        H^0(S,R^qf_*\mathcal F)
\]
be the edge homomorphism of the Leray spectral sequence associated with $f$ (Equivalently, $\rho^q$ is the morphism associated with the presheaf-to-sheafification morphism in the
definition of  $R^qf_*\mathcal F$).
 Let
\[
        \operatorname{ev}_t:
        H^0(S,R^qf_*\mathcal F)
        \to
        (R^qf_*\mathcal F)_t
        \otimes_{\mathcal O_{S,t}} k(t)
\]
be evaluation at $t$ (first taking the germ at $t$ and then passing to the
quotient). Let
\[
        \operatorname{bc}_t^q:
        (R^qf_*\mathcal F)_t
        \otimes_{\mathcal O_{S,t}} k(t)
        \to
        H^q(X_t,\mathcal F_t)
\]
be the base-change morphism obtained (by taking $M=k(t)$)
in the construction of
\cite[Chapter III, \S 3, p.  116]{BS76}. Then the diagram
\[
\begin{tikzcd}[column sep=large,row sep=large]
H^q(X,\mathcal F)
    \arrow[r,"{\rho^q}"]
    \arrow[dr,"{\operatorname{res}_t}" swap]
&
H^0(S,R^qf_*\mathcal F)
    \arrow[r,"{\operatorname{ev}_t}"]
&
(R^qf_*\mathcal F)_t
    \otimes_{\mathcal O_{S,t}} k(t)
    \arrow[dl,"{\operatorname{bc}_t^q}"]
\\
&
H^q(X_t,\mathcal F_t)
&
\end{tikzcd}
\]
is commutative.
\end{lemma}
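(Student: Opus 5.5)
The plan is to reduce everything to naturality of the maps with respect to the open neighbourhoods $V\ni t$, and then to pass to the limit. I will work with the definitions of the three maps as they are given in \cite[Chapter III, \S 3]{BS76}.

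First, recall how $\operatorname{bc}_t^q$ is constructed there. For each open $V\ni t$ we have the restriction map $H^q(f^{-1}(V),\mathcal F)\to H^q(X_t,\mathcal F_t)$, induced by $\mathcal F|_{f^{-1}(V)}\to j_{t,*}\mathcal F_t$. This map is $\mathcal O_S(V)$-linear, where $\mathcal O_S(V)$ acts on the target through $\mathcal O_S(V)\to\mathcal O_{S,t}\to k(t)$; the point is that $f^*\mathfrak m_t$ annihilates $\mathcal F_t$. The maps are compatible with shrinking $V$, so taking the direct limit over $V\ni t$ gives an $\mathcal O_{S,t}$-linear map
\[
\beta_t:(R^qf_*\mathcal F)_t=\varinjlim_{V\ni t}H^q(f^{-1}(V),\mathcal F)\to H^q(X_t,\mathcal F_t).
\]
Here I use that the stalk of the sheafification equals the stalk of the presheaf $V\mapsto H^q(f^{-1}(V),\mathcal F)$. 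Because $\beta_t$ kills $\mathfrak m_t(R^qf_*\mathcal F)_t$, it factors through $(R^qf_*\mathcal F)_t\otimes k(t)$, and this factorization is $\operatorname{bc}_t^q$ (the case $M=k(t)$ of the Bănică--Stănăşilă construction). If their construction is phrased instead through a complex computing $Rf_*$ locally, I would first check that it agrees with this limit description. I expect this identification to be the main obstacle, since it requires unwinding the Grauert-type complex used in \cite{BS76}. My approach would be to observe that both descriptions are induced by the canonical morphism $\mathcal F\to j_{t,*}\mathcal F_t$, and then appeal to the uniqueness of the $\delta$-functorial maps they yield.

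Next, I will use the parenthetical description of $\rho^q$ in the statement: it is the composite of $H^q(X,\mathcal F)=\mathcal P^q(S)$ with the sheafification map $\mathcal P^q(S)\to H^0(S,R^qf_*\mathcal F)$. This is the same fact invoked in the proof of Lemma~\ref{lem-H2-lift}, via \cite[\S 12.2.5]{EGAIII}. Taking the germ at $t$, the element $\rho^q(c)_t$ is therefore the image of $c$ under $H^q(X,\mathcal F)\to\varinjlim_V H^q(f^{-1}(V),\mathcal F)$, that is, the class of $c|_{f^{-1}(V)}$ for any $V\ni t$. Consequently $\operatorname{bc}_t^q(\operatorname{ev}_t(\rho^q(c)))=\beta_t(\rho^q(c)_t)$, which is the image of $c|_{f^{-1}(V)}$ under the restriction map to $X_t$.

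Finally, the morphism $\mathcal F\to j_{t,*}\mathcal F_t$ on $X$ restricts on $f^{-1}(V)$ to the morphism used above, and restriction of cohomology from $X$ to $f^{-1}(V)$ is functorial. Hence the composite $H^q(X,\mathcal F)\to H^q(f^{-1}(V),\mathcal F)\to H^q(X_t,\mathcal F_t)$ is exactly $\operatorname{res}_t$, using $H^q(X,j_{t,*}\mathcal F_t)\cong H^q(X_t,\mathcal F_t)$, which holds because $j_t$ is a closed embedding and $j_{t,*}$ is exact. This gives commutativity of the triangle.
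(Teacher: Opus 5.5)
Your proposal is correct and is essentially the paper's argument. Your map $\beta_t$ is exactly the stalk map $(R^qf_*\theta)_t$ for the quotient $\theta:\mathcal F\to G:=j_{t,*}\mathcal F_t\cong\mathcal F/\mathfrak a_t\mathcal F$. The paper uses naturality of the presheaf-to-sheafification edge map for $\theta$, together with the identity $\operatorname{ev}_{t,G}\circ\rho^q_G=\mathrm{id}$ coming from $(R^qf_*G)_t\cong H^q(X_t,\mathcal F_t)$; this is your direct-limit computation written at the sheaf level.

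The paper also does not unwind the construction in \cite{BS76}. It simply asserts that this stalk map, factored through $\otimes_{\mathcal O_{S,t}}k(t)$, is the base-change morphism, so the step you flag as the main obstacle is taken from that construction in both arguments.
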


\begin{proof}
We first explain and describe the restriction map $\operatorname{res}_t$ in a little more detail.
Let $I_t\subset \mathcal{O}_S$ be the coherent ideal sheaf defining the
reduced point $\{t\}\subset S$. Set
\[
\mathfrak{a}_t
:=
I_t\mathcal{O}_X
:=
\operatorname{Im}\bigl(
f^{-1}I_t\otimes_{f^{-1}\mathcal{O}_S}\mathcal{O}_X
\to
\mathcal{O}_X
\bigr).
\]
Then the analytic fiber $X_t$ is the closed complex subspace of $X$ defined
by $\mathfrak{a}_t$ (e.g., \cite[Chapter III, \S 3, p. 116]{BS76}), and
\[
j_{t,\ast}\mathcal{O}_{X_t}
\cong
\mathcal{O}_X/\mathfrak{a}_t.
\]
 Put
\[
G:=j_{t,\ast}\mathcal{F}_t .
\]
Since $\mathcal{F}_t=j_t^\ast\mathcal{F}$, we have canonical identifications
\[
G
\cong
\mathcal{F}\otimes_{\mathcal{O}_X}j_{t,\ast}\mathcal{O}_{X_t}
\cong
\mathcal{F}\otimes_{\mathcal{O}_X}(\mathcal{O}_X/\mathfrak{a}_t)
\cong
\mathcal{F}/\mathfrak{a}_t\mathcal{F}.
\]
Let
\[
\theta:\mathcal{F}\to G
\]
be the natural quotient morphism. Since $j_t$ is a closed embedding, the
canonical map
\[
H^q(X,j_{t,\ast}\mathcal{F}_t)
\to
H^q(X_t,\mathcal{F}_t)
\]
is an isomorphism (e.g., \cite[Chapter III, Lemma 2.10]{Ha77}). Under this identification, the morphism
\begin{equation}\label{formu-theta*}
    \theta_\ast:
H^q(X,\mathcal{F})
\to
H^q(X,G)
\cong
H^q(X_t,\mathcal{F}_t)
\end{equation}
is precisely the restriction map $\operatorname{res}_t$.

Recall the following standard description of the   edge
homomorphism of Leray spectral sequence. For an $\mathcal{O}_X$-module sheaf $\mathcal{H}$, the sheaf
$R^qf_\ast\mathcal{H}$ is the sheaf associated with the presheaf
$U\longmapsto H^q(f^{-1}(U),\mathcal{H})$
and the canonical morphism
\[
H^q(X,\mathcal{H})
\to
H^0(S,R^qf_\ast\mathcal{H})
\]
from the global sections of this presheaf to the global sections of its
associated sheafification sheaf is exactly the edge homomorphism of the  Leray spectral
sequence
\[
E_2^{p,q}
=
H^p(S,R^qf_\ast\mathcal{H})
\Rightarrow
H^{p+q}(X,\mathcal{H})
\]
(\cite[\S 12.2.5]{EGAIII} or \cite[\S 5, p. 31]{AHK73}).

Applying this description to the morphism of sheaves
\[
\theta:\mathcal{F}\to G
\]
and using the functoriality of sheafification (we may also directly use the theory of Grothendieck spectral sequence), we get the commutative diagram
\begin{equation}\label{formu-square}
    \begin{array}{ccc}
H^q(X,\mathcal{F})
&
\xrightarrow{\rho^q_{\mathcal{F}}}
&
H^0(S,R^qf_\ast\mathcal{F})
\\
\downarrow{\theta_\ast}
&&
\downarrow{\theta^q_S}
\\
H^q(X,G)
&
\xrightarrow{\rho^q_G}
&
H^0(S,R^qf_\ast G),
\end{array}
\end{equation}
where
\[
\theta^q_S
:=
H^0(S,R^qf_\ast\theta).
\]
Here $\rho^q_{\mathcal{F}}=\rho^q$, and $\rho^q_G$ denotes the corresponding
  edge homomorphism for $G$.

 Let \(U\subset S\) be an open neighbourhood of \(t\), and denote
by
\[
j_{t,U}:X_t\hookrightarrow f^{-1}(U)
\]
the induced closed embedding. Then
\[
G|_{f^{-1}(U)}
\cong
j_{t,U,\ast}\mathcal{F}_t .
\]
Consequently, the restriction morphism
\[
H^q(X,G)
\to
H^q(f^{-1}(U),G)
\]
is identified with the identity map of \(H^q(X_t,\mathcal{F}_t)\) through
the canonical identifications
\[
H^q(X,G)
=
H^q(X,j_{t,\ast}\mathcal{F}_t)
\cong
H^q(X_t,\mathcal{F}_t)
\]
and
\[
H^q(f^{-1}(U),G)
\cong
H^q(f^{-1}(U),j_{t,U,\ast}\mathcal{F}_t)
\cong
H^q(X_t,\mathcal{F}_t)
\]
(e.g., \cite[Chapter III, Lemma 2.10]{Ha77}).
It then follows that
\[
(R^qf_\ast G)_t
\cong
\varinjlim_{t\in U}H^q(f^{-1}(U),G)
\cong
H^q(X,G)
\cong
H^q(X_t,\mathcal{F}_t).
\]

We now use the aforementioned description of the Leray edge homomorphism as the
presheaf-to-sheafification map. For a class
\[
\xi\in H^q(X,G),
\]
the image of \(\rho^q_G(\xi)\) in the stalk \((R^qf_\ast G)_t\) is represented
by the compatible system of restrictions
\[
\xi|_{f^{-1}(U)}
\in
H^q(f^{-1}(U),G),
\qquad t\in U.
\]
Under the identifications above, all these restrictions correspond to the
same class \(\xi\in H^q(X,G)\). Therefore the composite
\[
H^q(X,G)
\xrightarrow{\rho^q_G}
H^0(S,R^qf_\ast G)
\to
(R^qf_\ast G)_t
\]
is the canonical identification
\[
H^q(X,G)
\cong
H^q(X_t,\mathcal{F}_t).
\]

Let
\[
\operatorname{ev}_{t,G}:
H^0(S,R^qf_\ast G)
\to
(R^qf_\ast G)_t
\cong
H^q(X_t,\mathcal{F}_t)
\]
denote the evaluation at $t$, followed by this canonical identification. Then we obtain
\begin{equation}\label{formu-eualiden}
    \operatorname{ev}_{t,G}\circ \rho^q_G
=
\operatorname{id}_{H^q(X,G)}
\end{equation}
under the canonical identification $H^q(X,G)\cong H^q(X_t,\mathcal{F}_t)$.

We now analyze the base-change
morphism. Recall the construction (\cite[Chapter III, \S 3, p. 116]{BS76}) of the base-change morphism (for \(M=k(t)\) in the notation of \cite[Chapter III, \S 3, p. 116]{BS76}),
 the morphism
\[
(R^qf_\ast\theta)_t:
(R^qf_\ast\mathcal{F})_t
\to
(R^qf_\ast G)_t
\]
induces the base-change morphism after passing to the fiber at \(t\). More
precisely, since
\[
G\cong \mathcal{F}/\mathfrak{a}_t\mathcal{F},
\qquad
\mathfrak{a}_t=I_t\mathcal{O}_X,
\]
the sheaf \(R^qf_\ast G\) is annihilated by \(I_t\). Thus
\[
\mathfrak m_t\cdot (R^qf_\ast G)_t=0,
\]
and the \(\mathcal{O}_{S,t}\)-linear morphism \((R^qf_\ast\theta)_t\)
factors uniquely through
\[
(R^qf_\ast\mathcal{F})_t
\to
(R^qf_\ast\mathcal{F})_t\otimes_{\mathcal{O}_{S,t}}k(t).
\]
The induced morphism
\[
(R^qf_\ast\mathcal{F})_t\otimes_{\mathcal{O}_{S,t}}k(t)
\to
(R^qf_\ast G)_t
\cong
H^q(X_t,\mathcal{F}_t)
\]
is precisely the base-change morphism \(bc^q_t\).

 Therefore
\[
bc^q_t\circ \operatorname{ev}_t
=
\operatorname{ev}_{t,G}\circ \theta^q_S .
\]

Combining this identity with the commutative diagram \eqref{formu-square}, we obtain
\[bc^q_t\circ \operatorname{ev}_t\circ \rho^q_{\mathcal{F}}
=
\operatorname{ev}_{t,G}\circ \theta^q_S\circ \rho^q_{\mathcal{F}}=
\operatorname{ev}_{t,G}\circ \rho^q_G\circ \theta_\ast.\]
By \eqref{formu-theta*} and \eqref{formu-eualiden}, this is exactly the $\operatorname{res}_t.$
This completes the proof.
\end{proof}

	\section*{Acknowledgement}
	
The author would like to express his sincere gratitude to Professors Sheng Rao and Quanting Zhao for their constant encouragement, generous support, and financial assistance.

\end{document}